\numberwithin{equation}{section}
\newtheorem{theorem}{Theorem}[section]
\newtheorem{proposition}[theorem]{Proposition}
\newtheorem{corollary}[theorem]{Corollary}
\newtheorem{definition}[theorem]{Definition}
\newcommand{\N}{\mathbb{N}}
\newcommand{\Q}{\mathbb{Q}}
\newcommand{\R}{\mathbb{R}}
\newcommand\astr{{{}^\ast\hspace{-.5pt}\R}}
\newcommand\astn{{{}^\ast\hspace{-.5pt}\N}}
\newcommand\asta{{{}^\ast\hspace*{-3.5pt}A}}
\newcommand{\astb}{{}^{\ast}\hspace*{-2.5pt}B}
\newcommand{\ns}[1]{{}^\ast\hspace{-1pt}#1}
\newcommand\st{{\rm st}}
\newcommand{\sy}{\textbf{Sy}}
\newcommand{\co}{\textbf{Co}}
\newcommand{\Un}{\textbf{Un}}
\newcommand{\Po}{\textbf{Po}}
\renewcommand*{\MR}[1]{\href{http://www.ams.org/mathscinet-getitem?mr=#1&return=pdf}{MR #1}}
\newcommand*{\ZBL}[1]{\href{http://www.zentralblatt-math.org/zmath/en/advanced/?q=an:#1&format=complete}{Zbl #1}}
\title[History of Archimedean and non-Archimedean approaches]
{History of Archimedean and non-Archimedean approaches to uniform
  processes: Uniformity, symmetry, regularity}
\author{Emanuele Bottazzi} \address{Department of Mathematics,
  University of Pavia, Via Adolfo Ferrata 5, 27100 Pavia, Italy}
\email{emanuele.bottazzi@unipv.it, emanuele.bottazzi.phd@gmail.com}
\author{Mikhail G. Katz} \address{Department of Mathematics, Bar Ilan
  University, Ramat Gan 5290002 Israel} \email{katzmik@math.biu.ac.il}
\date{\today}
\begin{document}


\begin{abstract}
We apply Nancy Cartwright's distinction between theories and basic
models to explore the history of rival approaches to modeling a notion
of chance for an ideal uniform physical process known as a fair
spinner.  This process admits both Archimedean and non-Archimedean
models.  Advocates of Archimedean models maintain that the fair
spinner should satisfy hypotheses such as invariance with respect to
rotations by an arbitrary real angle, and assume that the optimal
mathematical tool in this context is the Lebesgue measure.  Others
argue that invariance with respect to all real rotations does not
constitute an essential feature of the underlying physical process,
and could be relaxed in favor of regularity.  We show that, working in
ZFC, no subset of the commonly assumed hypotheses determines a unique
model, suggesting that physically based intuitions alone are
insufficient to pin down a unique mathematical model.  We provide a
rebuttal of recent criticisms of non-Archimedean models by Parker and
Pruss.

Keywords: Infinitesimals; hyperreals; 
probability;
regularity; axiom of choice; 
underdetermination;
non-Archi\-medean fields
\end{abstract}

\subjclass[2010]{Primary 03H05; Secondary 03H10,
00A30, 
60A05, 
26E30, 
01A65
}

\maketitle
\tableofcontents


\section{Introduction}

We explore the history of Archimedean and non-Archi\-medean models for
uniform chances.  We compare the properties of Archimedean and
non-Archi\-medean models for uniform chances and review rival
approaches to modeling uniform chances in Section \ref{f3b}.  We focus
on the hypotheses which underlie such rival formalisations of an ideal
uniform physical process known as a fair spinner in Section
\ref{section spinner}.

Advocates of Archimedean models maintain that the fair spinner should
satisfy hypotheses such as invariance with respect to rotations by an
arbitrary real angle,
and assume that the optimal mathematical tool in this context is the
Lebesgue measure.  Thus, Pruss writes:
\begin{enumerate}\item[]
[I]t seems that regardless of whether we prefer full conditional,
regular hyperreal or regular qualitative probabilities, we need to
abandon our symmetry intuitions in some but not other cases in ways
that seem intuitively ad hoc.  It may thus be intuitively better to
stick to the classical system of real-valued probabilities, where not
all subsets have defined probabilities, where we lack regularity, but
it is much easier to get symmetries.%
\footnote{Pruss \cite[p.\;8525]{Pr21b}.}
\end{enumerate}
He concludes:
\begin{enumerate}\item[]
\emph{Lebesgue measure} on Euclidean space is always translation- and
rotation-invariant, and product measures for infinite coin-flip
situations will be invariant under all shifts, and indeed under all
permutations of the coins.%
\footnote{Ibid.; emphasis added.}
\end{enumerate}
Pruss goes on to claim that ``the classical approach is superior in
allowing for the symmetries.''%
\footnote{Ibid.  In a separate text \cite{Pr21a}, Pruss claims to show
that a non-Archimedean probability function~$P$ can always be used to
define another non-Archimedean probability function~$P'$ such that
$P'$ preserves all the decision-theoretic verdicts delivered by
expected utility reasoning calculated using~$P$.  Bottazzi and Katz
\cite{21b} demonstrated that such a claim has no merit, for the
following reason.  Pruss's functions~$P'$ are obtained from~$P$ by
rescaling the infinitesimal part of~$P$; but all such~$P'$ are
\emph{external} entities, as shown in \cite{21b}.  Meanwhile, only
\emph{internal} entities are relevant to the practice of nonstandard
analysis, since they are the entities to which the transfer principle
applies; see further in Section~\ref{f3}.}

We consider both Archimedean (Section \ref{sec A-track}) and
non-Archi\-medean models for the spinner (Section \ref{sec B-track})
and discuss how they are positioned with respect to the hypotheses
discussed in Section \ref{section spinner}.  We show that, working in
Zermelo--Fraenkel set theory with the Axiom of Choice (ZFC), no subset
of these hypotheses determines a unique model, whether Archimedean or
non-Archi\-medean.  Thus we argue that physically based intuitions
alone do not enable one to pin down a unique mathematical model.  In
Section \ref{s55} we analyze Parker's critique based on haecceitism
targeting nonstandard chances.  We review the necessary material on
Robinson's framework in Section~\ref{s6}.  We present our conclusions
in Section~\ref{s7}.

\section{History of regular chances}
\label{f3b}

In the foundations of probability theory, the role of events of zero
chance%
\footnote{For a survey on the relations between the notions of chance
and credence, see e.g., Spohn~\cite{spohn}.  The distinction goes back
to Carnap.}
has a rich history.
%
%
Such events typically arise when considering models involving
real-valued probabilities on infinite sample spaces.  Under natural
hypotheses, Archimedean probabilities (namely, probability measures
that take values in Archimedean ordered fields, usually the field of
real numbers~$\R$) are forced to assign zero chance to some events
that can nevertheless occur.  Some relevant examples are models of
vast pluralities of trials/tickets, such as an infinite collection of
coin tosses or an infinite fair lottery.  Large market economies,%
\footnote{See e.g., Anderson et al.~\cite{An24}.}
random number generation in computer simulation, and possibly
predator/prey evolutions in an ecosystem are additional examples of
pre-mathematical phenomena involving a vast sample space, that are
typically studied using mathematical infinity of one kind or another.%

While the scientific hypothesis of a finite universe is commonly
accepted, even according to theories that reject such a hypothesis and
envision, say, an infinity of galaxies, it is difficult to incorporate
such infinity into instances of vast phenomena that actually merit
mathematical modeling, such as the phenomena mentioned above, for at
least two reasons:
\begin{enumerate}
\item
such phenomena are by their nature limited in time and space (it seems
impractical to envision a market economy spanning an infinity of
galaxies); and
\item
even admitting such infinite phenomena in the physical realm, there is
no guarantee that the essential mathematical feature of~$\N$, namely
the existence of a Markov shift~$n\mapsto n+1$, can be assumed to hold
there.%
\footnote{See further in note~\ref{f22}.}
\end{enumerate}

Archimedean theories of such gedankenexperiments are typically
obtained from real-valued probability measures (either
finitely-additive or~$\sigma$-additive) that assign a null measure to
each singleton.  In these cases, a typical approach consists in
distinguishing the notion of an impossible event from that of an event
of zero chance, the latter being a coarser category that includes the
former as well as other events.

However, these situations also admit models that assign a positive
chance to each outcome that is considered possible.  This property is
called regularity:
\begin{itemize}
\item[\textbf{Reg}] Every possible outcome has a positive chance.
\end{itemize}

A number of early advocates of regularity (or \emph{strict coherence})
are mentioned in H\'ajek~\cite{staying} and
Wenmackers~\cite[pp.\;244--246]{We19}.  Such early advocates include
Carnap~\cite{Ca63} (1963), Kemeny~\cite{Ke55} (1955), Edwards et
al.~\cite{Ed63} (1963), Shimony~(\cite{Sh55}, 1955 and \cite{Sh70},
1970), and Stalnaker~\cite{St70} (1970).
One of the advantages of
regularity is that it enables one to work with conditional
probabilities relative to an event of infinitesimal chance.  See e.g.,
\cite[Section 3.2]{benci2018}, \cite[Section 3.4]{Bo21},
\cite[Section~2.2]{21c} and references therein.

Real-valued chances on infinite sample spaces assigning equal
probability to singleton sets cannot be regular.  Therefore regular
models necessarily take values in a proper (and hence
non-Archi\-medean) extension of the field of real numbers.  Typically,
such regular models use a particular kind of extension of the real
numbers, such as fields of hyperreal numbers of Robinson's framework
\cite{Ro66} for analysis with infinitesimals.  There are other
non-Archi\-medean ordered extensions of the real numbers, but they are
currently not powerful enough to enable the development of models of
regular chances.%
\footnote{Bottazzi and Katz \cite[Section\;3.4]{21b},
\cite[Section\;4]{21c}.  For a recent approach that enables
non-hyperreal non-Archimedean chances in a \emph{partially} ordered
field extension of the real numbers, see Bottazzi and Eskew
\cite{Bo21}.}
%


\subsection{History of critiques of regular chances}

Regular chances have been the target of repeated criticism in the
recent literature starting with Williamson (\cite{Wi07}, 2007).
Barrett~\cite{barrett10}, Parker~\cite{Pa21}, Pruss~\cite{Pr21a},
discuss mathematical models of chances on infinite sample spaces, such
as an infinite fair lottery and an infinite collection of coin tosses,
and claim that regular infinitesimal models possess undesirable
properties.

Parker focuses on an alleged `unattractiveness of regular chances',%
\footnote{Parker \cite[p.\;4]{Pa21}.}
claiming that in such models, ``[P]erfectly symmetric events under
perfectly symmetric circumstances must be assigned different chances
and credences.''%
\footnote{Ibid.}
Pruss claims that infinitesimal regular probabilities are
underdetermined, namely, that they ``carry more information than is
determined by the plausible kinds of constraints on these
probabilities.''%
\footnote{Pruss \cite[p.\;778]{Pr21a}.}
Such critiques have sometimes been anchored in purported `physical'
principles.  For instance, according to Parker,
\begin{enumerate}\item[]
Williamson (2007)%
\footnote{The reference is to Williamson \cite{Wi07}.}
argued that we should not require probabilities to be regular, for if
we do, certain `isomorphic' \emph{physical events} (infinite sequences
of coin flip outcomes) must have different probabilities, which is
implausible.%
\footnote{\label{f12}Parker \cite[Abstract]{Pa21}; emphasis added.}
\end{enumerate}
Parker evidently agrees with Williamson.  We analyze Parker's comment
in Section~\ref{f24}.

We will examine some of the physical principles mentioned in relation
to models of uniform physical processes.  Our analysis is occasioned
by the contribution by Parker \cite{Pa21} to the debate on the
properties of Archimedean and non-Archi\-medean models for uniform
chances.%
\footnote{See e.g., \cite{barrett10, benci2013, benci2018, 21b, 21c,
  Pr21a, Wi07, No22}.}
This topic is part of a broader discussion with regard to the
applicability of mathematics in the natural sciences.%
\footnote{Some recent articles include \cite{19c}, \cite{et},
  \cite{18i}.}

In Section~\ref{s55}, we address Parker's critique of regular
probabilities in terms of haecceitism.

\subsection{Critiques of regular probabilities}
\label{f24}

Parker and others have made several distinct criticisms of regular
probabilities, including
\begin{enumerate}
\item[(a)]
  the implausibility of assigning unequal
  probabilities to (allegedly) isomorphic events, and
  \item[(b)] excess
    structure, etc.
\end{enumerate}
We will address item (b) in Section~\ref{s55}.  Here we will focus on
item (a).

While criticizing regular probabilities, Parker speaks of ``certain
`isomorphic' physical events'' and mentions ``infinite sequences of
coin flip outcomes'' as an example (see above).

However, an \emph{infinite sequence} of coin flips is a basic
mathematical model%
\footnote{\label{f16}In mathematical modeling, we distinguish basic
mathematical models as opposed to full-fledged mathematical theories
brought to bear upon the analysis of the basic mathematical
models. For instance, the choice of~$\N$ in modeling a vast amount of
coin tosses is a basic mathematical model, whereas measure-theoretic
notions such as a choice of the sample space and a suitable measure
are not a part of such a basic mathematical model, but rather advanced
mathematical tools.  See further in Section~\ref{cart}.}
rather than the physical event itself.  If one takes the index set of
the flips to be~$\N$, then a subsequence obtained by a Markov shift
will indeed be isomorphic to the original sequence.  On the other
hand, if one takes the index set to be a hyperfinite set~$H$ (see
Section~\ref{f3}), any (internal) proper subset will be of strictly
smaller size than~$H$, in accordance with Euclid's part-whole
principle.  The latter is not `implausible,' contrary to Parker's
claim.  Parker's implausibility claim hinges on a conflation of the
realm of the phenomena being modeled and the realm of a basic
mathematical model.  In other words, Parker's preferred mathematical
framework (namely, the Archimedean one) influences his choice of the
preferred model (namely,~$\N$) for describing certain probabilistic
phenomena.

Similarly, Williamson (the originator of this type of argument)
describes the gedankenexperiment as follows:
\begin{quote}
[A] fair coin will be tossed infinitely many times at one second
intervals.%
\footnote{Williamson \cite[p.\,174]{Wi07}.  Note that
Williamson's ``infinitely many times at one second intervals'' cannot
be understood in terms of a hyperfinite set, since the natural numbers
form an external subset of the hyperfinite set~$\{1,2,3,\ldots,H\}$;
see further in Section~\ref{f3}.}
\end{quote}
Here the notion of a vast plurality of coin tosses is
pre-mathematical, while the structure implied by Williamson's
``infinitely many times at one second intervals''
is implicitly or explicitly assumed to be labeled by the set of
natural numbers~$\N$.%
\footnote{\label{f22}As mentioned in Section~\ref{f3b}, it is not
obvious that the Markov shift~$n\mapsto{n+1}$ naturally present
on~$\N$ could be assumed to be present in the physical realm even in
the hypothesis of physical theories that enable physical infinity.
Such a shift is necessary for the Williamson--Parker argument, as in
the claim that ``Regularity \ldots\ implies that the probability that
every coin toss in a given infinite sequence yields heads is smaller
than the probability that every toss after the first yields heads.
But these two events may be `isomorphic', etc.''  \cite[p.\;2]{Pa21}.
Without the Markov shift, there is no basis for the claim that these
events would be isomorphic.}
Even more explicitly, Parker goes on to describe Williamson's
gedankenexperiment in the following terms:
\begin{enumerate}\item[]
Williamson (2007) considers a \emph{countably infinite sequence} of
independent coin flips at one second intervals, with a fair coin.%
\footnote{Parker \cite[p.\;4]{Pa21}; emphasis added.}
\end{enumerate}
Parker seeks to describe the pre-mathematical concept of a vast
plurality of coin tosses, but ends up describing it in terms of its
basic mathematical model of a ``countably infinite sequence'' that, by
definition, is indexed by the formal mathematical entity~$\N$.
%
%
Parker claims further that
\begin{enumerate}\item[]
[U]nder the standard Kolmogorov axioms, if the tosses are fair and
independent, the probability of an infinite sequence of heads must be
smaller than any positive real number.%
\footnote{Parker \cite[p.\;2]{Pa21}}
\end{enumerate}
However, the Kolmogorov axioms are, as the name suggests, already a
mathematical formalisation, and the ``infinite sequence'' in question
is clearly already indexed by~$\N$.  Therefore whatever conclusions
Parker draws from Kolmogorov's axiomatisation apply not to the realm
involving a pre-mathematical vast plurality of tosses, but rather to
the realm involving a basic mathematical model.  Accordingly, these
authors leave little room for the possibility of alternative
mathematical representations.  Their selection of the underlying
principles of the gedankenexperiment risks predetermining the outcome
of their arguments.

Concern about the risk of conflating the physical process with a basic
mathematical model was also voiced by Rainer Sachs in the context of
modeling of both quantum phenomena and general relativity.  Sachs and
Wu mention both the `jumpy' nature of the quantum world and the
apparent incompleteness of space-time.%
\footnote{``Whether incompleteness is a property of nature or a
  misleading feature of current models is a highly controversial
  question.  We remark that infinite extendibility of spacelike
  geodesics has no direct physical interpretation'' (Sachs and Wu
  \cite[p.\;28]{Sa77}).}
They point out that mathematicians -- due to their professional
training in differentiable manifolds -- tend to expect that nature
would `naturally' be modeled by smooth manifolds and tend to assume
that such models would have the formal property of completeness which
is `natural' by differential-geometric standards.  This is a case of
projecting one's mathematical training onto nature, similar to the
expectation that the pre-formal notion of a vast plurality is
`naturally' modeled by the formal entity~$\N$, causing philosophical
prejudice against other formal models.

The issue here is distinct from the philosophical issue regarding the
risk of ``taking the map for the territory'' in mathematical modeling
of physical phenomena.  Rather, the issue concerns the fact that the
\emph{same} physical situation may be usefully described by
\emph{distinct} basic mathematical models; see futher in
Section~\ref{cart}.

Critics of non-Archimedean probability acknowledge that they seek to
model physical phenomena rather than merely exploring \emph{logical
possibilities}.  Thus, Easwa\-ran's explicit premise is that ``All
\emph{physical} quantities can be entirely parametrized using the
standard real numbers.''%
\footnote{Easwaran \cite[Section~8.4, Premise~3]{Ea14}; emphasis
added.}
Regardless of whether the premise is justified,
it reveals that Easwaran's goal is ultimately to model \emph{physical}
phenomena.  Parker's text~\cite{Pa21} employs the adjective `physical'
over twenty times.  One of the earliest critics of non-Archimedean
chances, Adam Elga, published an article entitled ``Infinitesimal
chances and the laws of nature'' (indicating that he is dealing with
applicability of infinitesimals to modeling natural laws, rather than
merely with logical possibilities), where he mentions that
\begin{enumerate}\item[]
when continuously infinite state-spaces are involved, a great many
candidate systems (including systems of chancy laws that
\emph{physicists} have taken seriously) will ascribe zero chance to
any individual history%
\footnote{Elga \cite[p.\;69]{El04}; emphasis added.}
\end{enumerate}
and asks: ``infinitesimals to the rescue?''%
\footnote{Ibid.}

Another physical process that tends to be conflated with its
Archimedean model is one that Barrett \cite{barrett10} and
Pruss~\cite{Pr21a} refer to as a fair spinner, analyzed in
Section~\ref{section spinner}.


\subsection{History of the three realms}
\label{cart}

In 1995, Nancy Cartwright introduced a distinction between theories
and basic models when applying mathematical theories to analyze
phenomena in science (broadly conceived), in the context of the
history of the evolution of our understanding of the phenomenon of
superconductivity.  Cartwright emphasizes the role of basic
mathematical models (such as the London--London model of
superconductivity of 1935), to which a more elaborate mathematical
theory (such as the microscopic BCS theory of 1957) is applied.  Based
on her interpretation of the history of superconductivity, she argues
that one first builds a basic mathematical model of the phenomenon in
question, in a step Cartwright et al.~refer to as
\emph{phenomenological model-building}.%
\footnote{Cartwright et al.~\cite[p.\;148]{Ca95}; see also Su\'arez
  and Cartwright \cite[p.\;66]{Ca08} and references therein.}
%
Only then does a full-fledged mathematical theory emerge, providing an
account for the basic mathematical model.%
\footnote{Additional examples of scientific models are: ``The billiard
  ball model of the gas, the Bohr model of the atom, the
  Lotka--Volterra model of predator-prey interaction, general
  circulation models of the atmosphere, and agent-based models of
  social systems'' Frigg \cite[p.\,1]{Fr23}.}

Accordingly, in reference to probabilistic phenomena, one could
distinguish three realms:
\begin{enumerate}
	\item[(i)] pre-mathematical (more precisely, pre-formal)
          concepts best described as physical and/or
          naive-probabilistic phenomena, such as vast pluralities of
          trials, uniform processes, etc.;
	\item[(ii)] basic mathematical models thereof resulting from
          phenomenological model-building;
	\item[(iii)] advanced mathematical tools brought to bear upon
          the analysis of the basic mathematical models, such as the
          choice of a sample space and suitable measures such as the
          Lebesgue measure, hyperfinite counting measures, etc.
\end{enumerate}

A pre-mathematical phenomenon typically admits multiple basic
mathematical models, that in turn can be analyzed with distinct
advanced theories/tools.%
\footnote{Alternatives to classical logic can also be envisioned as
  tools in mathematical modeling: see e.g., Bridges and
  McKubre-Jordens \cite{failing}.  Among tools based on classical
  logic, infinitesimal approaches provide further alternatives to the
  classical Archimedean ones.}
Such non-uniqueness is not necessarily a drawback, since distinct
models could account for distinct aspects of the pre-mathematical
phenomenon of realm (i) that cannot be simultaneously described by a
single model.%
\footnote{A related point was made by Wenmackers
  \cite[p.\;239]{We19}.}

Whichever model one uses, it is important to keep distinct the
pre-mathematical concepts and the basic mathematical models, namely
realms (i) and (ii) above.  For instance, macroscopic events involving
\emph{infinite} sample spaces are, arguably, only gedankenexperiments.%
\footnote{The issue of physical infinity was already discussed in
note~\ref{f3b}.  Parker considers a `relativistic' version of the coin
flips that allegedly is `more realistic' \cite[p.\,14]{Pa21}.
However, even the relativistic approach relies on idealizing
assumptions such as an `eternal sequence' of coin flips that ``occur
in the same place at ten-second intervals'' or a `continuum' of such
sequences of coin flips.  As discussed in \cite[Section~2.3]{21b} an
assumption of a countable time-ordered model already involves a
full-fledged idealisation arguably lacking a physical instantiation.
Indeed, hypotheses such as time ordering or the use of the continuum
refer to a basic mathematical model belonging to the realm of what
Cartwright calls \emph{phenomenological model-building}, and not to
the pre-mathematical notion of a vast plurality of trials.}
Indeed, to the best of our current physical knowledge, it is unknown
whether there is such a thing as e.g., a physical infinite sequence.%
\footnote{On the other hand, microscopic events involving infinite
  sample spaces may arise in some mathematical models of quantum
  mechanics and statistical mechanics.}
%
%
%

Thus, mathematical models featuring infinite sample spaces need to be
distinguished from physical phenomena involving vast pluralities of
trials, in order to avoid the risk of circularity and related
fallacies.

Some gedankenexperiments leading to the rejection of regular chances
are based upon implicit assumptions that may predetermine the
corresponding basic mathematical model.%
\footnote{For earlier discussions of such circularity see e.g.,
  Bottazzi and Katz \cite[Sections 2.3, 2.4, and~2.5]{21b} and
  Bottazzi et al.~\cite[section~3.5, pp.\;212--213]{19c}.}
We elaborate further on the distinction between physical phenomena and
their mathematical models in Section~\ref{cart2}.

\subsection{Basic mathematical models in probability}
\label{cart2}

The first step of building a basic model for the phenomenon of a vast
plurality of (independent) trials admits two significant
possibilities.  The possibilities that have been widely discussed in
the literature are
\begin{enumerate}
\item[]
(A)~$\N$ as a basic mathematical model for the vast plurality, and
\item[]
(B) a nonstandard integer number of trials as a basic mathematical
  model.
\end{enumerate}
Formalizing pre-mathematical vast plurality by means of a nonstandard
integer as in approach (B) would mean simply the following.  A
nonstandard positive integer is greater than any naive counting number
such as~$1,2,3$; in this sense, it formalizes the idea of a vast
plurality.%
\footnote{\label{f17}In the model-theoretic approach to analysis with
infinitesimals, the starting point is a hyperreal extension~$\astr$ of
the field~$\R$, featuring also a proper extension~$\astn$ of the
semiring~$\N$.  The nonstandard elements of~$\astn$ are unlimited
(hyper)integers.  However, there is an alternative axiomatic/syntactic
approach to analysis with infinitesimals, where infinitesimals are
found within the field~$\R$ itself, and nonstandard integers in~$\N$
itself.  The starting point of such an approach is an enrichment of
the language of set theory by the addition of a one-place predicate
`standard'.  A nonstandard integer in~$\N$ can plausibly serve as a
model for an infinite sample space because it is greater than every
naive integer such as~$1,2,3$.  For details see Section~\ref{appendix
  nelson}, as well as Bottazzi and Katz \cite[Section~3.2]{21b}.
Nonstandard integers are also exploited in the formalisation of
classical paradoxes, such as the sorites paradox and the Theseus ship
paradox; see Dinis \cite{Di23}.}
Only then does one apply a full-fledged mathematical theory, starting
with a suitable sample space and measure adapted to the parameters of
the basic mathematical model.%
\footnote{\label{f7}The mathematical theory includes e.g., the choice
  of the subsets of the sample space that represent possible outcomes.
  This collection is typically extended in a way that is
  mathematically convenient.  For instance, the adoption of
  real-valued probability measures usually leads to the choice of an
  algebra or a~$\sigma$-algebra that includes the desired events.  For
  a class of non-Archimedean models (namely, the hyperfinite models),
  every subset of the sample space can be regarded as possible and
  possible events are assigned a probability (see Section \ref{sec
    strenghening} and footnote~\ref{footnote internal}).}

Cartwright's insight concerning the importance of basic models is not
always taken into account by scholars in the foundations of
probability, when dealing e.g., with the features of a vast plurality
of trials.%
\footnote{The relation between pre-mathematical vast pluralities and
mathematical infinities was discussed in Section \ref{cart}.}

\subsection{The issue of countable additivity}

The counterpart of countable additivity in Archimedean models is
hyperfinite additivity in non-Archimedean models based on the
hyperreals (see Section~\ref{f3}).  Both are convenient mathematical
tools; in fact, hyperfinite additivity of a suitable internal counting
measure can be used to prove the countable additivity of, say, the
Lebesgue measure.  Neither countable additivity nor hyperfinite
additivity is based on alleged physical hypotheses; indeed, the
physicist R.T. Cox did not include countable additivity in the
postulates of probability theory \cite{Co46}.

In this context, it is instructive to analyze a pair of comments that
appear in a recent article by Isaacs et al.:
\begin{enumerate}
  \item
``Adopting a non-standard measure (in which infinitesimal
  probabilities may be assigned) is much the same as denying countable
  additivity, as it means that the standard components of
  probabilities need not be countably additive.''%
\footnote{Isaacs et al.~\cite[p.\;912]{Is22}.}
\item
``Strictly speaking, a non-standard measure would not respect
  countable additivity, as the notion of the countable addition of
  non-standard numbers is undefined.''%
\footnote{Isaacs et al.~\cite[note 24]{Is22}.}
\end{enumerate}
Here Isaacs et al.~fail to acknowledge the fact that nonstandard
measures possess properties parallel to countable additivity of the
Lebesgue measure when it comes to modeling vast pluralities of trials.
Furthermore, their claim (1) is imprecise: the standard part of a
hyperfinite measure may well be countably additive. Indeed, every
probabilistic model based on a $\sigma$-additive measure can be turned
into a hyperfinite regular model in a way that the corresponding
standard part has the same additivity as the original measure.
%
%
Thus, modeling with a hyperfinite measure does not require ``denying
countable additivity'' anymore than modeling with a real-valued
measure.

\section{The fair spinner}
\label{section spinner}

Barrett describes the fair spinner as follows:
\begin{enumerate}\item[]
[A] fair spinner [is] a perfectly sharp, nearly frictionless pointer
mounted on a circular disk. Spin the pointer and eventually it comes
to rest at some random point along the circumference, which we
identify with a real number in the half-open interval~$[0,1)$.%
\footnote{Barrett \cite[p.\;65]{barrett10}.}
\end{enumerate}

As we will see in Section \ref{section hypotheses}, Barrett, Parker,
and Pruss claim that the mathematical models for this spinner should
have a high degree of symmetry, namely ``invariance under rotations
\ldots\ and modular translations''%
\footnote{Parker \cite [pp.\;16--17]{Pa21}.}
by an arbitrary real angle and real vector, respectively.  No known
models of this process are simultaneously regular and invariant under
these transformations.%
\footnote{More specifically, if a set~$A = \{T^n(x)\}$ (for~$n\in\N$)
  is infinite, then no measure defined for~$\{x\}$,~$A$, and~$TA$ can
  be regular and invariant; cf.~Parker \cite[p.\,17]{Pa21}.}
Accordingly, regular models cannot be invariant under rotations and
modular translations by an arbitrary real angle.  From these
considerations,
they draw the conclusion that the hypothesis of invariance under
rotations and modular translations forces the choice of an Archimedean
model based on the Lebesgue measure.

In the next sections we discuss the role of invariance under these
transformations and discuss the extent to which both Archimedean
(A-track) and non-Archi\-medean (B-track) models of the spinner
satisfy the various hypotheses usually attributed to this
gedankenexperiment.  In doing so, we will address the question whether
the Lebesgue measure is the only Archimedean model satisfying the
alleged physical principles underlying the spinner.  Ultimately, we
argue that the current hypotheses for the spinner are not sufficient
to determine a unique model, whether among Archimedean models or among
non-Archi\-medean models.

Hyperreal probabilities have been repeatedly criticized for their
alleged lack of unique models.%
\footnote{See the discussion of such criticisms by Pruss in Bottazzi
  and Katz \cite{21b,21c} and references therein.}
For instance, Parker claims to detect `theoretical vices' in hyperreal
probabilities, namely ``dependence on extrinsic, haecceitistic
features, and, arguably, arbitrary and potentially misleading excess
structure.''%
\footnote{Parker \cite[p.\;9]{Pa21} (see further in
Section~\ref{s55}).}

Our analysis shows that a similar critique of non-uniqueness applies
to Archimedean models, as well.  As we show in Section \ref{sec
  A-track}, the opinion that a uniform chance over an interval has a
unique mathematical model is baseless.

We stress that the possibility of representing the same phenomenon by
means of distinct mathematical models is not necessarily a flaw, since
it enables the development of models with additional mathematical
features that may be relevant for various applications.%
\footnote{In many cases the advocates of Archimedean models routinely
rely on purely mathematical (rather than philosophical) features in
pursuit of their advocacy; such an attitude is reasonable and applies
equally to non-Archimedean models.}
%
Examples of such additional features for non-Archimedean models are
discussed in Section \ref{sec B-track}.%
\footnote{See also Bottazzi and Katz \cite[Section~2.5]{21c}.}

\subsection{Choosing hypotheses on the model}
\label{section hypotheses}

Let us recall Barrett's hypotheses on the uniform spinner
\cite[p.\;65]{barrett10}.
\begin{itemize}
	\item[(A)] The chance that the pointer initially at
          position~$a \in [0,1)$ will come to rest in a set~$S
            \subseteq [0,1)$ is independent of~$a$.
	\item[(B)] The possibility that the pointer initially at rest
          at position~$a$ will come to rest somewhere in a set~$S$ is
          independent of~$a$. 
	\item[(C)] The chances and possibilities are unaffected if the
          scale is rotated by an arbitrary real angle.
	\item[(D)] At least one point outcome is possible.%
\footnote{\label{footnote po1}Besides qualitative, comparative
  theories of chance that do not assign a specific probability to any
  outcome, we are not aware of any model, be it Archimedean or
  non-Archimedean, that accepts~(D) but does not assign a probability
  to some point outcome.  In the philosophical discussion of infinite
  processes, it appears that when one assumes hypothesis (D), one
  tries also to assign every point outcome a probability, if this is
  compatible with the other hypotheses on the physical problem under
  analysis.}
\end{itemize}
From these hypotheses Barrett deduces the following:
\begin{itemize}
\item[{\sy}$_{\R}$] The chance is invariant%
\footnote{Pruss refers to such invariance as ``symmetry''.  Hence the
  abbreviation {\sy}, used also in \cite[Section 2]{21c}.}
with respect to rotations by an arbitrary real angle (equivalently,
with respect to translations modulo~$1$ of any real vector);
\item[{\Un}] The chance is uniform;
\item[{\Po}] Every point outcome is possible
and is assigned a probability.%
\footnote{We have chosen to include the hypothesis that every point is
  assigned a probability in our formulation of {\Po}, since this
  is consistent with the established practice.  For further details,
  see footnote \ref{footnote po1}.}
\end{itemize}
As argued in \cite[Section 2]{21c}, hypothesis {\Un} assumes different
forms for Archimedean and non-Archi\-medean models. For Archimedean
models based upon real-valued measures on~$[0,1)$ it can be phrased as

\begin{itemize}
\item[{}\quad{\Un}$_{\,a}$] If two intervals have the same length,
  then they have the same chance.
\end{itemize}

For hyperfinite models, the appropriate definition is the following.
\begin{definition}
\label{f9}
  {\rm {\Un}}$_{\,h}$  If two internal sets have the same
  number of points, then they have the same chance.
\end{definition}

Barrett's description of the fair spinner constitutes a basic
mathematical model.%
\footnote{See note~\ref{f16} and Section~\ref{cart}.}
From this description, however, some authors skip directly to the
advanced mathematical tool of the Lebesgue measure without further
justification.  Indeed, this is assumed to be `the standard model'
of the spinner.%
\footnote{Barrett \cite[pp.\;67--68]{barrett10}.  Barrett only argues
  against the use of atomic measures on~$[0,1)$, but does not discuss
    the significance of the choice of the~$\sigma$-algebra of Lebesgue
    measurable sets.  Furthermore, he does not take into account the
    existence of other continuous models with the same desirable
    properties as the Lebesgue measure, as we discuss in
    Section~\ref{sec A-track}.}

Barrett does not elaborate on the relation between the notion of
possibility of an outcome and the property of a mathematical model of
assigning some probability to every possible outcome.  However, his
choice of the Lebesgue measure as `the standard model' of the spinner
enables him to assign a (null) probability to every point outcome.%
%
%

\subsection{Strengthening the hypothesis {\Po}}
\label{sec strenghening}

While Barrett focused on the hypothesis~{\Po} and on other events that
can be described as a disjoint union of countably many points, all of
which have a probability in the standard model, he does not mention
more general events.  However, we argue that endorsement of the
property that every point outcome is possible could arguably entail
endorsement also of the property that every set outcome is possible.
Namely, if we assume that each~$x\in[0,1)$ is possible, we could
  accept that also every~$S\subseteq[0,1)$ that includes~$x$ is
    possible.%
\footnote{This monotonicity argument resembles e.g., the monotonicity
  of logical consequence in classical logic. In that setting, if
 ~$\phi$ is a consequence of~$\Sigma$, then it is also a consequence
  of any set including~$\Sigma$ as a subset.}

For the purpose of our survey on Archimedean and non-Archimedean
models of the spinner, we find it relevant to discuss also the extent
to which they can satisfy the even stronger property
\begin{itemize}
\item[\textbf{Tot}] Every set outcome is assigned a probability.
\end{itemize}
This property is assumed as an axiom of Non-Archimedean Probability by
Benci et al.~\cite{benci2018}.%
\footnote{Taking into account that some possible outcomes might not be
assigned a probability, \textbf{Tot} is a stronger property than
``every set outcome is possible''.}
%
Note that neither {\Po} nor \textbf{Tot} requires the notion of
chance to be regular.%
\footnote{For example, the Lebesgue measure is total but not regular
  in the Solovay model; see note~\ref{f43}.}

In the sequel, we discuss how the current models are positioned with
respect to hypotheses {\sy}$_{\R}$, {\Un}, {\Po},
\textbf{Reg} and \textbf{Tot}.

\section{The A-track approach}
\label{sec A-track}

The A-track approach seeks to model the outcome of the uniform spinner
by means of an Archimedean measure (i.e., an~$\R$-valued measure).
For such measures, there is a well-known incompatibility result among
properties {\Un}, {\Po} and \textbf{Reg}.  We also
consider the property

\begin{itemize}
	\item[{\Un}$_p$] all points have the same measure,
\end{itemize}
implied by uniformity in the case of the sample space~$[0,1)$ as
  above.

\begin{proposition}\label{prop 1}
An Archimedean probability measure~$\mu$ on an infinite sample space
$X$ cannot satisfy simultaneously the hypotheses \emph{{\Un}$_p$},
\emph{{\Po}} and \emph{\textbf{Reg}}.
\end{proposition}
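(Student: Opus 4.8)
The plan is to run the classical ``infinitely many equiprobable atoms'' argument, in which the Archimedean nature of the value field is the decisive ingredient. Suppose for contradiction that $\mu$ satisfies {\Un}$_p$, {\Po} and \textbf{Reg} simultaneously. First I would invoke {\Po}: for every $x \in X$ the singleton $\{x\}$ lies in the domain of $\mu$, so that $c(x) := \mu(\{x\})$ is a well-defined, nonnegative element of the (Archimedean) ordered value field. Hypothesis {\Un}$_p$ then gives that $c(x)$ is independent of $x$, say $c(x) = c$ for all $x$; and since by {\Po} every point outcome is possible, \textbf{Reg} forces $c > 0$.

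Next I would apply the Archimedean property of the value field to the positive element $c$ and to $1$: there is some $n \in \N$ with $n c > 1$. Since $X$ is infinite, we may choose pairwise distinct points $x_1, \dots, x_n \in X$. Each $\{x_i\}$ is in the domain of $\mu$ by {\Po}, hence so is their finite, disjoint union $A := \{x_1, \dots, x_n\}$, the domain being an algebra of subsets of $X$; finite additivity of $\mu$ — which is all that the term ``probability measure'' requires here, whether $\mu$ is taken finitely additive or $\sigma$-additive — yields $\mu(A) = \sum_{i=1}^n \mu(\{x_i\}) = n c > 1$. This contradicts the normalization of a probability measure, namely $\mu(A) \le \mu(X) = 1$ for every $A$ in its domain.

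I do not anticipate a genuine obstacle here; the argument is short, and the only points requiring care are bookkeeping ones: {\Po} is what supplies the measurability of the relevant singletons and of their finite unions, and one should be explicit that the step ``$c > 0$ together with $nc \le 1$ for all $n \in \N$ is impossible'' is precisely an application of the Archimedean axiom — exactly the step that fails when the value field is non-Archimedean (where $c$ may be a positive infinitesimal), which is why regular models must be sought among non-Archimedean-valued measures, as discussed in Section~\ref{f3b}.
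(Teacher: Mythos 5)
Your proof is correct and follows essentially the same route as the paper's: equal positive point masses forced by {\Un}$_p$, {\Po} and \textbf{Reg}, then finitely many points already exceed total mass $1$ by the Archimedean property. The only difference is bookkeeping — you fix $n$ with $nc>1$ up front and use finite additivity plus monotonicity, while the paper derives $\mu(X)>n\varepsilon$ for all $n$ via superadditivity and then invokes Archimedeanity — which is the same argument in a slightly different order.
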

\begin{proof}
Hypotheses {\Un}$_p$, {\Po} and \textbf{Reg} together
entail that points have the same positive measure
$\varepsilon>0$. Then infinity of the sample space and finite
superadditivity of the measure are sufficient to conclude
that~$\mu(X)> n\varepsilon$ for each~$n$.  Since the measure is
Archimedean, these inequalities contradict~$\mu(X) = 1$.
\end{proof}

Proposition \ref{prop 1} entails that an Archimedean model for the
spinner satisfying {\Po} must reject at least one of the
properties {\Un} or \textbf{Reg}.  Barrett,%
\footnote{Barrett \cite[p.\;65]{barrett10}.}
Parker,%
\footnote{Parker \cite[pp.\;17--18]{Pa21}.}
and Pruss%
\footnote{Pruss \cite[Section~3.3]{Pr21a}.}
argue that physical principles suggest the use of a notion of chance
that also satisfies {\sy}$_{\R}$.  However, if one models the
outcomes of the spinners with the Archimedean
interval~$[0,1)\subseteq\R$, {\Po} and {\sy}$_{\R}$ are
  also incompatible with \textbf{Reg} (the proof is similar).

\begin{proposition}
\label{prop 2}
An Archimedean probability measure~$\mu$ on the interval~$[0,1)
  \subseteq \R$ cannot satisfy simultaneously the hypotheses
  \emph{{\Po}}, \emph{\textbf{Reg}} and
  \emph{{\sy}}$_{\R}$.
\end{proposition}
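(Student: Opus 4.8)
The plan is to reduce the statement to Proposition~\ref{prop 1} by showing that, on the sample space~$[0,1)\subseteq\R$, hypothesis~{\sy}$_{\R}$ already entails hypothesis~{\Un}$_p$. First I would fix two arbitrary points~$x,y\in[0,1)$ and consider the translation modulo~$1$ by the real vector~$y-x$; this map carries the singleton~$\{x\}$ onto the singleton~$\{y\}$. By~{\Po}, both singletons lie in the domain of~$\mu$, and by~{\sy}$_{\R}$ the measure is invariant under modular translations, so~$\mu(\{x\})=\mu(\{y\})$. Since~$x$ and~$y$ were arbitrary, all points have the same measure, which is exactly~{\Un}$_p$.

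With~{\Un}$_p$ in hand, the argument proceeds precisely as in the proof of Proposition~\ref{prop 1}: hypotheses~{\Un}$_p$, {\Po} and~\textbf{Reg} together force this common value to be a fixed~$\varepsilon>0$; then, selecting~$n$ distinct points of the infinite set~$[0,1)$ and invoking finite superadditivity, one obtains~$\mu([0,1))\ge n\varepsilon$ for every~$n\in\N$; and since~$\mu$ is Archimedean (i.e.,~$\R$-valued), these inequalities are incompatible with~$\mu([0,1))=1$.

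I do not expect a genuine obstacle. The only point requiring a moment's care is to confirm that modular translation indeed sends singletons to singletons lying inside the domain of~$\mu$, which is immediate from~{\Po}; and that~{\sy}$_{\R}$ is formulated so as to guarantee equality (not merely comparability) of the measures of a set and its translate. One could alternatively run the whole contradiction directly, without citing Proposition~\ref{prop 1}, but phrasing it as a reduction via~{\Un}$_p\Rightarrow$ makes the logical dependence transparent and avoids duplicating the Archimedean step.
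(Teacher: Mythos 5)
Your proposal is correct and is essentially the argument the paper intends: the text introduces Proposition~\ref{prop 2} with the remark ``(the proof is similar),'' meaning precisely that {\sy}$_{\R}$ (via translations modulo~$1$, with {\Po} guaranteeing singletons are assigned a probability) forces all points to have the same measure, after which the superadditivity/Archimedean contradiction of Proposition~\ref{prop 1} applies verbatim. Your reduction via {\Un}$_p$ just makes that implicit step explicit.
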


%




Proposition \ref{prop 2} and the preference for {\sy}$_{\R}$
force the rejection of either {\Po} or \textbf{Reg}.%
\footnote{Dropping the hypothesis \textbf{Reg} might lead to measures
that satisfy {\Un}, {\Po} and possibly other properties, such as
{\sy}$_{\R}$, but do not assign a measure to finite sets (and also not
to countable sets if the measure is~$\sigma$-additive). To the best of
our knowledge, this kind of measure has not been considered as a model
of uniform chances.  For this reason, in the sequel whenever we assume
{\Po} we will usually also assume that points are assigned a
probability (but see Section~\ref{sec minimal model}).}
%
In practice, one typically assumes {\Po}, forcing the rejection
of \textbf{Reg}.  Thus supporters of Archimedean models propose a
notion of chance based upon a particular measure that satisfies
hypotheses {\sy}$_{\R}$, {\Un}, {\Po}, namely the
Lebesgue measure.

\subsection{When is the Lebesgue measure total?}
\label{section is Lebesgue total}

If one rejects the full Axiom of Choice, then the Lebesgue measure can
also satisfy \textbf{Tot}.  Indeed, assuming that ``there exists an
inaccessible cardinal,'' Solovay provides a model for the real numbers
where the axiom of dependent choice (ADC) holds and every set of real
numbers is Lebesgue measurable.%
\footnote{\label{f43}Solovay~\cite{So70}.}
Thus, Solovay's model assumes the existence of a weakly inaccessible
cardinal.  Indeed, the consistency of the theory ZF+ADC+``all sets of
reals are Lebesgue measurable'' implies consistency of ZF+``there
exists an inaccessible cardinal.''  For an introductory discussion on
Solovay's model see Ciesielski \cite{howgood} and references therein.
See also Howard and Rubin \cite[pp.\,150--151]{Ho98}.%
\footnote{Mancosu and Massas assert that ``Solovay (1970) showed that
if there is an inaccessible cardinal, then there is a model of ZF in
which all sets of reals are Lebesgue measurable'' \cite{Ma24}.
However, this is a misstatement of Solovay's result.  Models of ZF
where all sets of reals are Lebesgue measurable were known
considerably earlier, such as the Feferman-Levy model from the 1960s.
Solovay's achievement was to build such a model for ZF+ADC.
Concerning the definition of the Lebesgue measure in ZF (without C),
see e.g., Kanovei and Katz \cite{17i}.}

Without assuming inaccessible cardinals, one can build a model of
ZF+ADC where the Lebesgue measure admits a translation-invariant,
$\sigma$-additive, total extension.  Such a model is due to Solovay
and was presented in detail by Sacks.%
\footnote{Sacks \cite[Theorem~4.26]{Sa69}.}

In order to prove~$\sigma$-additivity of the Lebesgue measure it is
sufficient to work with the axiom of countable choice (ACC).  Since
ADC entails ACC, in Solovay's model the Lebesgue measure is
$\sigma$-additive and satisfies \textbf{Tot.}

However, the theory ZF+ADC is unable to prove certain fundamental
theorems of set-theoretic mathematics,%
\footnote{For the distinction between ordinary mathematics and
  set-theoretic mathematics, see Simpson \cite{simpson}.}
such as Tychonoff's Theorem, the Prime Ideal Theorem or the
Hahn--Banach Theorem for general vector spaces.  Each of these
theorems requires a stronger version of choice than ADC.%
\footnote{For a discussion of some choice principles used in
  set-theoretic mathematics, we refer to Hrbacek and
  Katz~\cite[Section~1.1]{21e}.}
Meanwhile, by assuming even the Hahn--Banach Theorem, it is possible
to prove the existence of sets that are not Lebesgue measurable, so
that this measure would not be total.%
\footnote{For further details, we refer to Foreman and Wehrung
  \cite{Fo91} and Fremlin \cite{fremlin}.}
%


\subsection{Extensions of the Lebesgue measure in ZFC}

It turns out that, modulo a sufficiently strong choice principle that
entails the existence of nonmeasurable sets,%
\footnote{Usually, the results described in Propositions \ref{prop
    sigma additive extensions}, \ref{prop incompatibility} and
  \ref{a-track dream} are formulated in ZFC. It might be of interest
  to determine the minimal foundational framework necessary to prove
  these statements.}
the Lebesgue measure is not the unique measure satisfying the
hypotheses {\sy}$_{\R}$, {\Un} and {\Po}. Indeed,
the Lebesgue measure admits many extensions, as detailed in the next
proposition.

\begin{proposition}
\label{prop sigma additive extensions}
For every set~$A \subseteq [0,1)$ of Lebesgue outer measure~$1$ such
  that no countable set of translates of~$A$ covers any set of
  Lebesgue measure greater than~$0$, there is a~$\sigma$-additive
  extension of the Lebesgue measure on~$[0,1)$ to a
    translation-invariant measure for which~$A$ is a null set.
\end{proposition}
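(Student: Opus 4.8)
The plan is to produce the extension in two stages: first assemble a translation-invariant~$\sigma$-ideal that contains~$A$ but meets the Lebesgue~$\sigma$-algebra only in genuinely null sets, and then extend the Lebesgue measure~$\mu$ ``across'' this ideal by a routine algebraic device (a direct single-set extension à la Halmos would also make~$A$ null, but would in general destroy translation-invariance, which is why the ideal is needed). Throughout I regard~$[0,1)$ as the compact group~$G$ under addition modulo~$1$, so that~$\mu$ is its normalized Haar measure, hence translation-invariant and~$\sigma$-additive; write~$\mathcal{L}$ for the Lebesgue-measurable subsets of~$[0,1)$ and~$\mathcal{N}$ for the Lebesgue-null sets.

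First I would introduce the collection~$\mathcal{I}$ of all~$N\subseteq[0,1)$ with~$N\subseteq Z\cup\bigcup_{n\in\N}(A+t_n)$ for some~$Z\in\mathcal{N}$ and some sequence~$(t_n)$ in~$[0,1)$; this is the smallest translation-invariant~$\sigma$-ideal containing~$A$ together with every Lebesgue-null set, and its closure under subsets, countable unions, and translations is immediate. The decisive point, and the one place the hypothesis on~$A$ enters, is the identity~$\mathcal{I}\cap\mathcal{L}=\mathcal{N}$: if~$E\in\mathcal{I}\cap\mathcal{L}$ then~$E\setminus Z$ is a Lebesgue-measurable set covered by a countable union of translates of~$A$, so by hypothesis~$\mu(E\setminus Z)=0$, whence~$\mu(E)=0$. (Already the single-translate case shows~$A$ has inner Lebesgue measure~$0$, while it has outer measure~$1$ by assumption; in particular~$A\notin\mathcal{L}$, so the extension constructed below is proper.)

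Next I would invoke the standard mechanism for extending a~$\sigma$-finite invariant measure over an invariant~$\sigma$-ideal that meets the ambient~$\sigma$-algebra only in null sets. Put~$\bar{\mathcal{L}}=\{\,B\triangle N:B\in\mathcal{L},\ N\in\mathcal{I}\,\}$ and~$\bar\mu(B\triangle N):=\mu(B)$. Each of the required verifications reduces to the identity above: $\bar{\mathcal{L}}$ is a~$\sigma$-algebra because complements satisfy~$(B\triangle N)^{c}=B^{c}\triangle N$ and a countable union~$\bigcup_n(B_n\triangle N_n)$ differs from~$\bigcup_n B_n\in\mathcal{L}$ by a member of~$\mathcal{I}$; $\bar\mu$ is well defined because~$B_1\triangle N_1=B_2\triangle N_2$ forces~$B_1\triangle B_2\in\mathcal{I}\cap\mathcal{L}=\mathcal{N}$; $\bar\mu$ is~$\sigma$-additive because if the~$B_n\triangle N_n$ are pairwise disjoint then the~$B_n$ are pairwise~$\mu$-almost disjoint (their pairwise intersections lie in~$\mathcal{I}\cap\mathcal{L}=\mathcal{N}$); and~$\bar\mu$ is translation-invariant because~$(B\triangle N)+t=(B+t)\triangle(N+t)$ and~$\mu$ is. Since~$A=\emptyset\triangle A\in\bar{\mathcal{L}}$ with~$\bar\mu(A)=\mu(\emptyset)=0$, the measure~$\bar\mu$ is a~$\sigma$-additive, translation-invariant extension of the Lebesgue measure on~$[0,1)$ for which~$A$ is a null set.

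I do not expect any single hard estimate here; the real content is the translation of the stated covering condition into the clean algebraic statement~$\mathcal{I}\cap\mathcal{L}=\mathcal{N}$, after which the extension step is an instance of a general, essentially folklore, lemma on extending invariant measures across invariant~$\sigma$-ideals. The one place that needs genuine care is the bookkeeping in that extension step --- verifying that~$\bar{\mathcal{L}}$ is really closed under complements and countable unions and that~$\bar\mu$ is genuinely~$\sigma$-additive rather than only finitely additive --- but each such check collapses, as indicated, to~$\mathcal{I}\cap\mathcal{L}=\mathcal{N}$, which is precisely what the hypothesis on~$A$ provides.
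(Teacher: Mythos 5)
Your proposal is correct. Note that the paper does not actually supply an argument for this proposition: its ``proof'' is a pointer to the literature (Ciesielski--Pelc, Ciesielski's survey, and Fremlin, Exercise 442Yc). Your construction --- form the translation-invariant $\sigma$-ideal $\mathcal{I}$ generated by $A$ together with the Lebesgue-null sets, verify from the covering hypothesis that $\mathcal{I}\cap\mathcal{L}=\mathcal{N}$, and then extend $\mu$ over $\mathcal{I}$ via the symmetric-difference algebra $\{B\triangle N: B\in\mathcal{L},\, N\in\mathcal{I}\}$ with $\bar\mu(B\triangle N)=\mu(B)$ --- is essentially the standard argument underlying those references, and all your verifications (well-definedness, closure of $\bar{\mathcal{L}}$ under complements and countable unions, $\sigma$-additivity via almost-disjointness of the $B_n$, and translation-invariance) do reduce to the identity $\mathcal{I}\cap\mathcal{L}=\mathcal{N}$ exactly as you say. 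In effect you have made explicit what the paper leaves as a citation, which is a gain in self-containedness; the only thing the citation buys beyond your write-up is the companion fact (emphasized by Ciesielski and Pelc) that sets $A$ satisfying the hypothesis actually exist under AC, which is needed for the proposition to have content but is not part of the statement you were asked to prove.
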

\begin{proof}
See e.g., Ciesielski and Pelc,%
\footnote{Ciesielski and Pelc \cite{extensions}.}
Ciesielski,%
\footnote{Ciesielski \cite[p.\;56]{howgood}.}
or Fremlin.%
\footnote{Fremlin \cite[Exercise 442Yc]{fremlin}.}
\end{proof}

However, there is no~$\sigma$-additive extension of the Lebesgue
measure that satisfies {\sy}$_{\R}$, {\Un} and
\textbf{Tot}.  When the sample space is the Euclidean space, we use
{\sy}$_{\R}$ to denote invariance under all isometries.

\begin{proposition}
\label{prop incompatibility}
The hypotheses \emph{{\sy}}$_{\R}$, \emph{{\Un}} and
\emph{\textbf{Tot}} are incompatible for an Archimedean
$\sigma$-additive~$\sigma$-finite measure on~$\R^n$.
\end{proposition}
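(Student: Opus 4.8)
The plan is to reduce Proposition~\ref{prop incompatibility} to Vitali's classical theorem that Lebesgue measure admits no countably additive, translation-invariant extension to the full power set of Euclidean space. The argument splits into two stages: a normalization stage, in which {\Un} and $\sigma$-finiteness are used to identify $\mu$ with Lebesgue measure on the Borel sets, and a Vitali stage, in which {\sy}$_{\R}$ and {\tot} are used to derive a contradiction.

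For the first stage, suppose $\mu$ satisfies {\sy}$_{\R}$, {\Un} and {\tot} on $\R^n$. The hypothesis {\Un} says that the measure of a box depends only on its side lengths; subdividing a box into congruent sub-boxes and using $\sigma$-additivity yields the usual multiplicative functional equation, which together with monotonicity and $\sigma$-finiteness (invoked here to exclude the possibility that bounded Borel sets have infinite measure) forces the box measure to be a constant multiple of volume. Equivalently, this is uniqueness of Haar measure on the locally compact group $\R^n$. The constant cannot be zero: a measure vanishing on all bounded Borel sets vanishes on $\R^n$, hence (by monotonicity) vanishes identically, and the zero measure does not model a uniform chance. Rescaling, we may thus assume that $\mu$ restricts to Lebesgue measure $\lambda$ on the Borel sets, so that in particular $\mu([0,1)^n)=1$ and $\mu\bigl((-1,2)^n\bigr)=3^n$. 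Note that $\sigma$-finiteness is genuinely needed here: counting measure on $\R^n$ satisfies {\sy}$_{\R}$, {\Un} and {\tot} but is not $\sigma$-finite, so some such hypothesis is required to exclude it.

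The second and decisive stage is the Vitali construction, where {\tot} does the essential work. On $\R^n$ declare $x\sim y$ iff $x-y\in\Q^n$, and --- invoking the Axiom of Choice, available since we work in ZFC --- let $V\subseteq[0,1)^n$ contain exactly one point of each $\sim$-class. By {\tot} the value $\mu(V)$ is defined, and $\mu(V)\le\mu([0,1)^n)=1$ by monotonicity. The translates $V+q$, for $q$ ranging over the countably infinite set $\Q^n\cap(-1,1)^n$, are pairwise disjoint, and one checks at once that $[0,1)^n\subseteq\bigcup_q(V+q)\subseteq(-1,2)^n$. Translation-invariance (a fragment of {\sy}$_{\R}$) gives $\mu(V+q)=\mu(V)$ for every $q$, and $\sigma$-additivity together with monotonicity then yields
\[
1 \;=\; \mu([0,1)^n) \;\le\; \mu\Bigl(\bigcup_q (V+q)\Bigr) \;=\; \sum_q \mu(V) \;\le\; \mu\bigl((-1,2)^n\bigr) \;=\; 3^n .
\]
But the middle sum ranges over a countably infinite index set with constant terms $\mu(V)$, so it equals $0$ if $\mu(V)=0$ and $+\infty$ if $\mu(V)>0$; in neither case does it lie in $[1,3^n]$. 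This contradiction proves the proposition.

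I expect the only delicate point to be the normalization stage --- extracting from {\Un}, $\sigma$-additivity and $\sigma$-finiteness that $\mu$ is, up to a positive constant, Lebesgue measure on the Borel sets, and in particular finite on bounded sets --- since everything afterwards is the textbook Vitali argument; one could alternatively read Proposition~\ref{prop incompatibility} in the setting of the preceding paragraph, taking $\mu$ to be an extension of (a scaling of) $\lambda$, in which case the normalization is immediate. Two features of the argument merit emphasis in the write-up. First, {\tot} is indispensable: without it, $V$ is precisely a non-Lebesgue-measurable Vitali set and no contradiction arises --- consistently with Proposition~\ref{prop sigma additive extensions}, which yields translation-invariant, $\sigma$-additive, but non-total extensions of $\lambda$. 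Second, only translation-invariance, and not the full isometry-invariance of {\sy}$_{\R}$, is used, so the result holds in every dimension $n\ge 1$ and needs no appeal to paradoxical decompositions of the Banach--Tarski type.
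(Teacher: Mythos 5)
Your Vitali stage is correct, and you are right that it needs only translation invariance; under the weaker reading in which $\mu$ is assumed to extend (a scaling of) Lebesgue measure, your argument is a complete, elementary proof. But the proposition as stated makes no such assumption, and the paper's own proof is not a Vitali argument at all: it is a citation of the Harazi\v{s}vili (case of $\R$) and Ciesielski--Pelc theorem, with Ciesielski's 1990 short proof, precisely because the statement is stronger than the extension-of-Lebesgue version. The gap in your proposal sits exactly where you wave at it, in the normalization stage. Since \textbf{Tot} makes $\mu$ defined on \emph{all} subsets of $\R^n$, its $\sigma$-finiteness is witnessed by arbitrary sets $X_k$ with $\mu(X_k)<\infty$, and nothing in that forces a bounded Borel set such as $[0,1)^n$ to have finite measure. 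Your subdivision argument only distributes whatever value the cube has among congruent pieces (the value $\infty$ is perfectly consistent with it), and the appeal to ``uniqueness of Haar measure'' is circular: that theorem concerns locally finite (Radon) Borel measures, so invoking it presupposes the very finiteness on bounded sets you are trying to establish. The degenerate case $\mu([0,1)^n)=0$ does force $\mu\equiv 0$ and can be dismissed by nontriviality, but the case $\mu([0,1)^n)=\infty$ with $\sigma$-finiteness witnessed by exotic, non-Borel sets is exactly the nontrivial content of the cited theorem; your counting-measure remark shows only that some hypothesis beyond invariance is needed, not that $\sigma$-finiteness disposes of this case in one line.

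So either supply the missing argument excluding a nontrivial isometry-invariant, $\sigma$-additive, $\sigma$-finite total measure on $\R^n$ that is infinite on every set with nonempty interior --- this is where the real work of Harazi\v{s}vili and Ciesielski--Pelc lies, and it is research-level rather than routine --- or explicitly weaken what you prove to the statement in the paragraph preceding the proposition (no $\sigma$-additive, isometry-invariant, total \emph{extension of Lebesgue measure} exists), for which your two-stage argument collapses to the textbook Vitali proof. As written, your text proves the weaker statement while presenting itself as a proof of the stronger one, with hypotheses {\sy}$_{\R}$ and {\Un} doing no work beyond translation invariance and the crucial hypothesis of $\sigma$-finiteness doing work it cannot actually do at the point where you invoke it.
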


This incompatibility result is due to Ciesielski and
Pelc~\cite{extensions}.%
\footnote{See also Ciesielski \cite[p.\;56]{howgood}, who notes that
  the case of~$\R$ had already been obtained by Harazi\v{s}vili
  \cite{Ha77}.  A short proof in the general case is due to
  Ciesielski~\cite{Ci90}.}

In particular, the extensions provided by Proposition \ref{prop sigma
  additive extensions} are not maximal.  Thus, assuming a sufficiently
strong choice principle, hypotheses {{\sy}}$_{\R}$, {{\Un}} and
{\textbf{Tot}} are incompatible for Archimedean models of physical
processes in~$\R^3$.  It turns out that the incompatibility result of
Proposition~\ref{prop incompatibility} is only due to the requirement
of~$\sigma$-additivity.

\begin{proposition}\label{a-track dream}
There exist finitely additive extensions of Lebesgue measure
on~$[0,1)$ that satisfy \emph{{\sy}}$_{\R}$, \emph{{\Un}} and
  \emph{\textbf{Tot}}.%
\footnote{This result is true for~$\R$ and~$\R^2$, but not for~$\R^n$
  with~$n\geq 3$. For a discussion of this limitation see
  \cite[p.\;57]{howgood}.}
\end{proposition}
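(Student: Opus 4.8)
The plan is to realize the desired measure as a translation-invariant Hahn--Banach extension of the Lebesgue integral, the translation-invariance being a consequence of the amenability of the group of modular translations (equivalently, of the circle's rotation group).

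First I would identify $[0,1)$ with the circle group $\mathbb{T} = \R/\Z$ under addition modulo $1$; under this identification invariance under rotations by an arbitrary real angle coincides with invariance under all modular translations, so the task is to produce a finitely additive probability measure on $\mathcal{P}(\mathbb{T})$ that extends Lebesgue measure $\mu$ and is invariant under all translations of $\mathbb{T}$. Let $B(\mathbb{T})$ be the real vector space of bounded functions $\mathbb{T} \to \R$ and $M \subseteq B(\mathbb{T})$ the subspace of bounded Lebesgue-measurable functions; on $M$ the functional $I(f) = \int_{\mathbb{T}} f\,d\mu$ is linear, positive, normalized, and translation-invariant, the last because $\mu$ is. Define $p \colon B(\mathbb{T}) \to \R$ by $p(f) = \inf\{I(g) : g \in M,\ g \ge f \text{ pointwise}\}$. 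One checks routinely that $p$ is finite (dominated by the constant $\|f\|_\infty$), positively homogeneous, subadditive, and translation-invariant, and that $I \le p$ on $M$.

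By the Hahn--Banach theorem, $I$ then extends to a linear functional on $B(\mathbb{T})$ dominated by $p$, and the set $K$ of all such extensions is convex, nonempty, and weak-$*$ compact: each $L \in K$ satisfies $|L(f)| \le \max\{p(f),p(-f)\} \le \|f\|_\infty$, so $K$ sits in the unit ball of $B(\mathbb{T})^{*}$ and Banach--Alaoglu applies. Since $M$, $I|_M$, and $p$ are translation-invariant, every translation of $\mathbb{T}$ induces a weak-$*$ continuous affine self-map of $K$, and these maps commute because $\mathbb{T}$ is abelian; the Markov--Kakutani fixed point theorem yields a common fixed point $\tilde I \in K$, that is, a positive, normalized linear functional on $B(\mathbb{T})$ extending $I$ on $M$ and invariant under all translations. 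Setting $\nu(S) = \tilde I(\chi_S)$ for $S \subseteq [0,1)$ gives a finitely additive probability measure defined on all of $\mathcal{P}([0,1))$; it agrees with $\mu$ on Lebesgue-measurable sets, hence extends Lebesgue measure and satisfies {\Un}, it satisfies {\sy}$_{\R}$ because $\nu(S+t) = \tilde I(\chi_{S+t}) = \tilde I(\chi_S) = \nu(S)$, and it satisfies \textbf{Tot} by construction.

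The hard part is the invariance: a bare Hahn--Banach extension need not be translation-invariant, and the averaging step --- Markov--Kakutani, or equivalently averaging the extension against a translation-invariant mean on $\ell^{\infty}(\mathbb{T})$ --- is exactly where amenability of the group of modular translations enters. This is also why the footnoted restriction holds: for $\R^n$ with $n \ge 3$ the isometry group is non-amenable (it contains non-abelian free subgroups, the source of the Banach--Tarski paradox), and the construction fails. The argument is classical, going back to Banach; see the references to Ciesielski's survey and Fremlin cited in Proposition~\ref{prop sigma additive extensions}.
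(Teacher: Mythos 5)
Your argument is correct and is essentially the approach the paper relies on: its proof is a citation to Royden's exercise (explicitly noted to rest on the Hahn--Banach theorem), and your write-up is exactly that classical construction, spelled out via a translation-invariant sublinear majorant, Hahn--Banach, and Markov--Kakutani (equivalently, amenability of the rotation group), including the correct explanation of why the construction breaks down in $\R^n$ for $n\geq 3$. The only other route mentioned in the paper is the Bernstein--Wattenberg infinitesimal proof, which you did not need.
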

\begin{proof}
See e.g., Royden.%
\footnote{Royden \cite[Chapter 10, Exercise 21]{royden}.}
This result was also proved via infinitesimal techniques by Bernstein
and Wattenberg~\cite{bernstein+wattenberg}.
\end{proof}

Note however that the finitely-additive measure provided by
Proposition~\ref{a-track dream} is not necessarily unique, since it
depends upon a nonconstructive principle (Royden's proof relies on the
Hahn--Banach theorem).%
\footnote{Proposition \ref{a-track dream} is incompatible with
H\'ajek's claim that, for an Archimedean model of the spinner, ``if we
want the probability to respect the symmetries {\ldots} then [the
  family of sets with an assigned outcome] cannot be the power set of
[the sample space].  Famously, certain subsets of [the sample space]
-- so-called non-measurable sets -- go missing''
\cite[Section~4]{staying}.  Namely, they don't ``go missing'' for the
measure of Proposition \ref{a-track dream}.}

In this section we saw that, assuming a sufficiently strong choice
principle, the hypotheses on the spinner do not single out a unique
notion of chance.

\begin{corollary}\label{a-track underdetermination}
	Hypotheses \emph{{\sy}}$_{\R}$, \emph{{\Un}} and
        \emph{{\Po}} or the stronger hypotheses
        \emph{{\sy}}$_{\R}$, \emph{{\Un}} and
        \emph{\textbf{Tot}} are insufficient to determine uniquely a
        notion of chance for the spinner.
\end{corollary}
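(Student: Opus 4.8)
The plan is to read the corollary off Propositions~\ref{prop sigma additive extensions} and~\ref{a-track dream} by producing, in each of the two cases, at least two distinct measures on the spinner's sample space $[0,1)$ that satisfy all the listed hypotheses. Throughout, a rotation of the circle is a translation modulo~$1$, so on $[0,1)$ the hypothesis {\sy}$_{\R}$ is translation-invariance and {\Un} is {\Un}$_{a}$; note also that any extension of the Lebesgue measure~$\mu$ automatically satisfies {\Un} (it agrees with $\mu$ on all intervals) and {\Po} (every singleton retains measure~$0$, which is the sense in which Barrett's Lebesgue model is taken to satisfy {\Po}).

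\emph{Case 1: the triple {\sy}$_{\R}$, {\Un}, {\Po}.} Here it suffices to exhibit one proper extension of $\mu$. First I would fix a set $A\subseteq[0,1)$ satisfying the hypotheses of Proposition~\ref{prop sigma additive extensions}; such sets exist in ZFC (a suitably constructed Bernstein-type set, as in the Ciesielski--Pelc argument, has Lebesgue outer measure~$1$, inner measure~$0$, and is not covered modulo a null set by any countable family of its translates). Proposition~\ref{prop sigma additive extensions} then yields a $\sigma$-additive, translation-invariant extension $\mu_A$ of $\mu$ with $\mu_A(A)=0$. Since $A$ is not Lebesgue measurable but lies in the domain of $\mu_A$, we have $\mu\neq\mu_A$, and both satisfy {\sy}$_{\R}$, {\Un}, {\Po}. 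This settles the first assertion.

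\emph{Case 2: the triple {\sy}$_{\R}$, {\Un}, \textbf{Tot}.} Now I would arrange the set $A$ above so that its complement $A'=[0,1)\setminus A$ also satisfies the hypotheses of Proposition~\ref{prop sigma additive extensions} (again possible in ZFC, by a symmetric transfinite construction). Applying the proposition to $A$ and to $A'$ gives $\sigma$-additive invariant extensions $\mu_A,\mu_{A'}$ of $\mu$ with $\mu_A(A)=0$ and $\mu_{A'}(A)=1-\mu_{A'}(A')=1$. Next I would invoke the construction underlying Proposition~\ref{a-track dream} --- Royden's Hahn--Banach argument, equivalently the Bernstein--Wattenberg infinitesimal construction --- which applies to \emph{any} finitely-additive translation-invariant measure defined on a subalgebra of $\power([0,1))$, not just to $\mu$, and extends it to a finitely-additive translation-invariant \emph{total} measure. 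Extending $\mu_A$ and $\mu_{A'}$ in this way produces total measures $\nu_A$ and $\nu_{A'}$, each invariant, uniform, and defined on all of $\power([0,1))$; but $\nu_A(A)=0\neq 1=\nu_{A'}(A)$, so they are distinct, which settles the second assertion. (In passing this strengthens the remark following Proposition~\ref{a-track dream}: the total finitely-additive invariant extension is not merely nonconstructive but genuinely nonunique.)

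The main obstacle is the set-theoretic bookkeeping behind the set $A$: one must verify that $A$ and its complement can be chosen \emph{simultaneously} to have Lebesgue outer measure~$1$ (equivalently inner measure~$0$) and to satisfy the ``no countable family of translates covers a positive-measure set'' hypothesis of Proposition~\ref{prop sigma additive extensions}. This is essentially the content of the Ciesielski--Pelc construction carried out symmetrically, and the verification of the covering condition for both $A$ and $A'$ is the one genuinely non-routine point; granting it, the rest is an immediate application of the two propositions already proved. A minor point to double-check is that the Hahn--Banach/infinitesimal extension step of Proposition~\ref{a-track dream} goes through with the initial measure taken to be $\mu_A$ rather than $\mu$ itself, which it does, since that argument uses only finite additivity and translation-invariance of the starting measure.
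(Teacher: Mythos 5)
Your Case 1 is essentially the paper's own argument: the corollary is read off the preceding results, with non-uniqueness for {\sy}$_{\R}$, {\Un}, {\Po} witnessed by the Lebesgue measure together with any of its translation-invariant $\sigma$-additive extensions from Proposition~\ref{prop sigma additive extensions}. For the stronger triple with \textbf{Tot}, however, you do more than the paper does: the paper only invokes Proposition~\ref{a-track dream} and remarks informally that the Hahn--Banach-based total extension is ``not necessarily unique,'' whereas you exhibit two provably distinct total invariant uniform measures, by first producing $\sigma$-additive invariant extensions $\mu_A$ and $\mu_{A'}$ with $\mu_A(A)=0$ and $\mu_{A'}(A)=1$, and then extending each to all of $\power([0,1))$ via the invariant extension theorem for amenable groups (the general form of Royden's exercise; you are right that it needs only finite additivity and invariance of the starting measure on an invariant algebra, plus amenability of the rotation group). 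That is a genuine sharpening of the paper's remark following Proposition~\ref{a-track dream}. The one step you assert rather than verify --- the existence of $A$ such that \emph{both} $A$ and its complement satisfy the Ciesielski--Pelc covering hypothesis --- is indeed the only delicate point, and it is worth noting that the most familiar examples fail here: if $A$ is a nonmeasurable subgroup of the circle, then two translates of its complement already cover the whole circle, so the complement violates the hypothesis. Nevertheless, a transfinite construction of length continuum, enumerating all pairs (countable $C$, closed $F$ of positive measure) and at each stage committing some $x\in F$ with $x-C$ to the complement of $A$ and some $y\in F$ with $y-C$ to $A$ (avoiding the fewer-than-continuum points already committed), produces such a set in ZFC; so your argument goes through, at the cost of this extra bookkeeping that the paper's own, weaker justification avoids.
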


\subsection{Assessment of the A-track models}
\label{sec conclusions atrack}

Advocates of Archimedean probability measures focus on the
representation of the spinner obtained via the Lebesgue measure.
Unless one works in models where all subsets of~$\R$ are Lebesgue
measurable, thus rejecting choice principles sufficient to entail
e.g., the Hahn--Banach Theorem, the Lebesgue measure does not satisfy
the hypothesis \textbf{Tot}.
\footnote{It would be possible to adopt a notion of chance based upon
  the Lebesgue inner measure instead on its finitely additive maximal
  extensions. This notion of chance would be invariant under
  translations and total, but it would not even be finitely additive.}
With a sufficiently strong choice principle it is possible to find
non-unique Archimedean finitely-additive measures compatible with, but
different from the Lebesgue measure, satisfying {\sy}$_{\R}$,
{\Un} and {\Po} or the stronger {\sy}$_{\R}$,
{\Un} and~\textbf{Tot}.

Thus it appears that Archimedean models based on these hypotheses
suffer from the very non-uniqueness seen as a shortcoming of
non-Archimedean models by their critics.
%
%
Adopting an argument by Pruss against hyperreal probabilities, one
could assert that ``there is no reason to take one or another
extension as the home of the correct model of an agent’s credences.''%
\footnote{Pruss \cite[Section 2, p.\;780]{Pr21a}.}
Thus, it can be argued that the Archimedean models of the spinner are
underdetermined.%
\footnote{Similar underdetermination charges against non-Archi\-medean
  models of the spinner based upon hyperfinite measures were addressed
  in \cite{21b,21c}.}
One can usefully compare this form of underdetermination with the
non-Archi\-medean case.  In the latter, a class of internal models
(namely hyperfinite models) is uniquely determined by the sample
space.%
\footnote{See Section \ref{sec conclusions btrack} and
  \cite[Section~3.4]{21c}.}
By comparison, in the Archimedean case it appears difficult to single
out one particular feature that uniquely determines a model.

\subsection
{Possible path to unicity: minimal Archimedean model}
\label{sec minimal model}

It turns out that there does exist a particular feature that, together
with the hypotheses {\sy}$_{\R}$, {\Un} and {\Po}, is sufficient to
single out a unique Archimedean model.  This feature is minimality.
Namely, it is possible to define uniquely an Archimedean model as the
restriction of the Lebesgue measure to the algebra~$\mathcal{M}$
generated by all the intervals (including closed ones, to be able to
include points, as well) of~$[0,1)$.%
\footnote{If one considers only the algebra generated by half-open
intervals of the form~$[x,x+a)$, one obtains an even smaller model
  that, however, does not assign a probability to any finite set.}
This minimal model assigns to
finite unions of disjoint intervals the sum of their length.  It also
has the convenient mathematical feature of~$\sigma$-additivity.%
\footnote{Concerning this Archimedean model, we note the following.
Since~$\mathcal{M}$ is not a~$\sigma$-algebra, one needs to explain
what one means by~$\sigma$-additivity.  It takes the following form.
Let~$L$ be the Lebesgue measure.  If~$A_n \in \mathcal{M}$ for
every~$n \in \N$, if~$A_i \cap A_j = \emptyset$ for every~$i\ne j$ and
if~$\bigcup_{n\in\N} A_n \in \mathcal{M}$, then
~$L\left(\bigcup_{n\in\N}A_n\right) = \sum_{n\in\N} L(A_n)$. Notice
the additional hypothesis~$\bigcup_{n\in\N} A_n \in \mathcal{M}$ that
is not necessary when working with~$\sigma$-additive measures defined
on~$\sigma$-algebras.}

However, it turns out that the existence of such a minimal model
cannot be construed as an advantage of solely the Archimedean models.
Indeed, in Section~\ref{sec minimal na model} we show that it is
possible similarly to specify a non-Archimedean model that agrees with
the minimal Archimedean model and, in addition, satisfies
\textbf{Reg}.


\section{The B-track approach}
\label{sec B-track}

The B-track approach seeks to model the outcome of the uniform spinner
by means of non-Archi\-medean measures.
Currently, the most expressive non-Archi\-medean models are the
hyperfinite models based upon Robinson's theory of mathematics with
infinitesimals.%
\footnote{%
Recall that the non-Archi\-medean probabilities obtained with
the~$\Omega$-limit approach of Benci et al.\ can also be obtained from
hyperfinite models \cite[Section\;3.2]{21c}.}
A detailed list of some non-Archi\-medean models for the spinner is
presented in \cite[Section 2.3 and Section 2.5]{21c}.  Let us recall
the definition of a hyperfinite model of the spinner.  For property
{\Un}$_h$ see Definition\;\ref{f9}.%
\footnote{Hyperreal fields used in this section require stronger
saturation properties than those of the basic construction outlined in
Section~\ref{f3}.}

\begin{definition}
\label{dhm}
A hyperfinite model of the spinner consists of a hyperfinite
set~$\Omega\subseteq\ns{[0,1)}$ such that~$\st\,\Omega =[0,1]$.%
\footnote{For the notion of hyperfinite set and of the standard part
  of a limited element of~$\astr$ see Section~\ref{f3}.}
The corresponding hyperfinite probability measure is~$P_{\Omega}(A) =
\frac{|A|}{|\Omega|}$ for every internal subset~$A \subseteq \Omega$,
so that~$P_{\Omega}$ satisfies the hyperfinite uniformity hypothesis
\emph{{\Un}}$_{\,h}$.  The hyperfinite set is chosen so as to
satisfy the following compatibility condition:
\begin{itemize}
\item[\emph{\textbf{Co}}]~$P_{\Omega}(\ns\!A\cap \Omega) \approx L(A)$
  for each Lebesgue measurable set~$A\subseteq[0,1)$.
	\end{itemize}
\end{definition}


Every hyperfinite measure can be turned into a hyperreal-valued
measure on the real interval~$[0,1)$, $\tilde{P} \colon \mathcal
  P([0,1)) \to \astr$, by
    setting~$\tilde{P}(A)=P_{\Omega}(\ns\!A\cap\Omega)$.  This is
    the basis for the approach by Benci et
    al.~\cite{benci2013,benci2018}, that further relies on a
    hyperfinite set~$\Omega$ satisfying the
    inclusion~$[0,1)\subseteq\Omega$
%
%
to ensure the validity of \textbf{Reg} for~$\tilde{P}$.%
\footnote{\label{footnote internal}On the other hand, the hyperfinite
  measures of the form~$P_{\Omega}$ always satisfy \textbf{Reg} on
  their domain (namely, the family of internal subsets of~$\Omega$).}

We review some properties of non-Archimedean models, in order to
highlight the differences with Archimedean models and to show
additional features that might be relevant for modeling more complex
phenomena. Moreover, we establish incompatibility results analogous to
those discussed in Section \ref{sec A-track} for the Archimedean
models.

\subsection{Lack of full invariance}

Hyperfinite models of the spinner are never invariant with respect to
rotations by all real angles.
%

\begin{proposition}
No hyperfinite probability for the spinner satisfies
\emph{{\sy}}$_{\R}$.
\end{proposition}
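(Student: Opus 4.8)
The plan is to argue with the hyperreal-valued probability $\tilde P\colon\mathcal P([0,1))\to\astr$ that a hyperfinite model $(\Omega,P_{\Omega})$ induces by $\tilde P(A)=P_{\Omega}(\ns{A}\cap\Omega)=\frac{|\ns{A}\cap\Omega|}{|\Omega|}$, since it is this assignment of chances to outcomes in $[0,1)$ that a symmetry hypothesis about the spinner actually constrains. Two elementary facts will be used throughout: $\tilde P$ is finitely additive (because $\ns{(\,\cdot\,)}$ commutes with finite Boolean operations while $P_{\Omega}$ is hyperfinitely additive), and for every standard real $x$ one has $\tilde P(\{x\})=P_{\Omega}(\{x\}\cap\Omega)=\frac{|\{x\}\cap\Omega|}{|\Omega|}\in\{0,\,1/|\Omega|\}$, the value being $1/|\Omega|$ exactly when $x\in\Omega$.

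The core is the orbit argument alluded to in the footnote to Section~\ref{section spinner}. Fix an irrational $\theta$, let $T$ be rotation by $\theta$ modulo $1$, and for $x\in[0,1)$ set $A=\{T^{n}(x):n\in\N\}$. Since $\theta$ is irrational the iterates $T^{n}(x)$ are pairwise distinct, so $A$ is infinite and $T(A)=A\setminus\{x\}$, giving a disjoint splitting $A=\{x\}\sqcup T(A)$ into two pieces. Assuming \textbf{Sy}$_{\R}$, finite additivity together with rotation-invariance yields $\tilde P(A)=\tilde P(\{x\})+\tilde P(T(A))=\tilde P(\{x\})+\tilde P(A)$, whence $\tilde P(\{x\})=0$. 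As $x$ was arbitrary, the underlying set must contain no standard real: $\Omega\cap[0,1)=\emptyset$.

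For every hyperfinite model in which point outcomes carry positive chance — in particular the regular models of Benci et al., where $[0,1)\subseteq\Omega$ — this is already a contradiction, since there $\tilde P(\{x\})=1/|\Omega|>0$ for each real $x$. To dispose of the residual possibility $\Omega\cap[0,1)=\emptyset$ I would invoke the compatibility condition \textbf{Co}: under \textbf{Sy}$_{\R}$ the count $g(\ell):=|\ns{[0,\ell)}\cap\Omega|$ of points of $\Omega$ in an arc of standard length $\ell$ is independent of the arc's position, while \textbf{Co} forces $g(\ell)\approx\ell\,|\Omega|$; but integrating the internal step function $a\mapsto|\,\ns{(\text{arc of length }\ell\text{ starting at }a)}\cap\Omega|$ over $[0,1)$ yields exactly $\ell\,|\Omega|$, which for irrational $\ell$ is not a hyperinteger, so this step function is non-constant, and a short overspill argument forces the non-constancy to be witnessed already on the standard reals — contradicting the position-independence of $g(\ell)$. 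A cleaner alternative in this case is to read \textbf{Sy}$_{\R}$ in Barrett's fuller sense, in which the set of possibilities is itself rotation-invariant, i.e.\ $\Omega+\theta=\Omega$ for every real $\theta$: then $\{\theta:\Omega+\theta=\Omega\}$ is an internal subgroup $G$ of $\ns{(\R/\Z)}$ whose internal cardinality divides $|\Omega|$, hence $G$ is hyperfinite and therefore — by transfer of the classification of finite subgroups of the circle — of the form $\{k/M:0\le k<M\}$ for some $M\in\astn$, a set containing no standard irrational, whereas $G\supseteq\R$.

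The step I expect to be the genuine obstacle is the last one in the residual case: an internal step function can be constant on every standard real without being constant, so the overspill reduction must be set up carefully to become rigorous. The subgroup route just indicated sidesteps this entirely, at the cost of adopting the reading of \textbf{Sy}$_{\R}$ that also constrains possibilities; and for the models actually advanced in the literature — which are regular, so that $\Omega$ even contains $[0,1)$ — the orbit argument of the second paragraph already suffices by itself.
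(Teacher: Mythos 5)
Your main route -- reading {\sy}$_{\R}$ as invariance of the induced measure $\tilde P(A)=P_\Omega(\ns{A}\cap\Omega)$ on $\mathcal P([0,1))$ -- is fine up to the conclusion $\Omega\cap[0,1)=\emptyset$, and the orbit argument does dispose of the regular models with $[0,1)\subseteq\Omega$. But the residual case is where the general statement actually lives (Definition \ref{dhm} never requires $\Omega$ to contain any standard point), and the step you flag there is a genuine gap, not a matter of presentation: {\sy}$_{\R}$ constrains the internal counting function $a\mapsto|\ns{[a,a+\ell)}\cap\Omega|$ only at \emph{standard} positions $a$, since the events in the domain of $\tilde P$ are subsets of $[0,1)$ and the rotations are by real angles, while the integral identity only shows the function is non-constant somewhere on $\ns{[0,1)}$. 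There is no overspill principle that transports non-constancy from nonstandard positions to a standard witness: an internal function can take one value at every standard point and other values only on internal sets of positions disjoint from $\R$, and nothing in \textbf{Co} excludes this a priori. So the first route, as written, does not prove the proposition; whether it can be completed at all under the pure-$\tilde P$ reading is left open by your argument.

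Your ``cleaner alternative'' is in fact the paper's own proof, and under the paper's reading it carries no extra cost: {\sy}$_{\R}$ is extracted from Barrett's hypothesis (C), which says that \emph{chances and possibilities} are unaffected by real rotations, so the invariance requirement does bear on the sample space. The paper argues exactly as in your subgroup computation: the stabilizer $G=\{\theta:\Omega+\theta=\Omega\}$ of the nonempty hyperfinite set $\Omega$ is a hyperfinite cyclic group (the map $g\mapsto g\cdot\omega$ for a fixed $\omega\in\Omega$ is injective), all of whose elements are rotations by hyperrational angles $k/M$; since a standard irrational is not hyperrational, $G$ cannot contain all real rotations. Equivalently, there are $x\in\Omega$ and real $r$ with $x+r\bmod 1\notin\Omega$, so the internal event $\{x\}$ of chance $1/|\Omega|$ has a rotate of chance $0$ (or none at all). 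Note that this argument needs neither \textbf{Co} nor the orbit argument. In short: your fallback coincides with the paper's proof and is correct; your primary route is incomplete at precisely the step you yourself identify as the obstacle.
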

\begin{proof}
This is a consequence of hyperfiniteness of the sample space~$\Omega$.
Indeed, there is~$x\in\Omega$ such that~$x+r\mod1\not\in\Omega$ for
some~$r\in\R$.  We provide a sketch of a proof which is most easily
stated in an axiomatic framework, whose finite nonstandard sets
correspond to hyperfinite sets of the model-theoretic frameworks (see
Section~\ref{appendix nelson}).  Let~$S^1$ be the unit circle of
length~$2\pi$.  Consider a nonempty finite subset~$F\subseteq S^1$.
Assume~$F$ is invariant under all rotations by standard angle.
Let~$G$ be the group of all rotations preserving~$F$.  By
assumption,~$G$ contains all standard rotations.  Since~$F$ is
finite,~$G$ must be a finite cyclic group, necessarily of nonstandard
order~$H$.  But then all elements of~$G$ are of the
form~$e^{\frac{2\pi i k}{H}}_{\phantom{I}}$, i.e., rotations by~$2\pi$
times a rational angle.  Since not all standard real numbers are of
that form, we obtain a contradiction.
\end{proof}

Indeed, working with hyperfinite models%
\footnote{Such as those discussed by Bottazzi and Katz
  \cite[Section~2.3]{21c}.}
suggests that it may be useful to relax the symmetry condition to the
following ``discrete'' condition:
\begin{itemize}
\item[{\sy}$^{\phantom{I}}_{|\Omega|}$] The chance is invariant with
  respect to rotations by an angle of the
  form~$e^{\frac{2{\pi}ik}{|\Omega|}}_{\phantom{I}}$, i.e., rotations
  by~$\frac{2\pi k}{|\Omega|}$ (equivalently, with respect to
  translations modulo~$1$ of vectors of the form
  $\frac{k}{|\Omega|}$), where~$|\Omega|$ is the (internal) number of
  elements in~$\Omega$, and~$k=1,2,3,\ldots,|\Omega|$.
\end{itemize}
If~$\Omega$ is obtained by picking equally spaced points in the
hyperreal interval~$\ns{[0,1)}$ and if~$|\Omega| = m!$ for some
  unlimited~$m \in \astn\setminus \N$, this condition is sufficient
  to entail the following property:
\begin{itemize}
\item[{\sy}$^{\phantom{I}}_{\Q}$] The chance is invariant with
  respect to rotations by an arbitrary rational angle (equivalently,
  with respect to translations modulo~$1$ of any rational
  vector).%
\footnote{This is due to the fact that, for every~$m \in
    \astn\setminus\N$, every rational number between~$0$ and~$1$ can
    be represented as~$\frac{k}{m!}$ for some~$k \in \astn$,~$k \leq
    m!$.}
\end{itemize}
However, neither {\sy}$^{\phantom{I}}_{|\Omega|}$ nor
{\sy}$^{\phantom{I}}_{\Q}$ entail {\sy}$_{\R}$ on a sample
space that is not a subset of~$\R$.  Moreover, some hyperfinite models
do not satisfy~{\sy}$^{\phantom{I}}_{|\Omega|}$.%
\footnote{Recall that, if~$\Omega \supset [0,1)$, then there exists~$x
  \in \Omega$ and~$k \leq |\Omega|$ such that~$x+\frac{k}{|\Omega|}
  \not \in \Omega$.}

\subsection{Compatibility results}

Despite the incompatibility of {\sy}$_{\R}$ with non-Archi\-medean
models, we have the following compatibility results.  For the property
{\Un}$_{\,h}$ see Definition \ref{f9}.

\begin{proposition}
%
%
There is a hyperfinite set~$\Omega \subseteq \ns{[0,1)}$ such
  that~$P_{\Omega}$ satisfies \emph{\textbf{Reg}},
  \emph{\textbf{Tot}}, \emph{{\Un}}$_{\,h}$, \emph{\textbf{Co}} and
  \emph{\sy}$^{\phantom{I}}_{\Q}$.
\end{proposition}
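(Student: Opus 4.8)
The plan is to construct $\Omega$ explicitly as a hyperfinite grid of equally spaced points in $\ns{[0,1)}$ whose mesh is the reciprocal of a nonstandard factorial, and then verify the five properties in turn. First I would fix an unlimited hyperinteger $m \in \astn \setminus \N$ and set $N = m!$, letting $\Omega = \left\{ \frac{k}{N} : k = 0, 1, 2, \ldots, N-1 \right\} \subseteq \ns{[0,1)}$. This is an internal (indeed hyperfinite) set with $|\Omega| = N = m!$, so the counting measure $P_\Omega(A) = \frac{|A|}{|\Omega|}$ is well defined on all internal subsets of $\Omega$; by construction it satisfies $\textbf{Un}_h$ (Definition \ref{f9}), since equinumerous internal subsets get equal measure, and it satisfies $\textbf{Reg}$ on its domain because every nonempty internal subset has cardinality at least $1$, hence positive measure (this is the remark in footnote \ref{footnote internal}).

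Next I would address $\textbf{Co}$, the compatibility with Lebesgue measure. For a Lebesgue measurable $A \subseteq [0,1)$ one computes $\tilde P(A) = P_\Omega(\ns\! A \cap \Omega) = \frac{|\{k < N : k/N \in \ns\! A\}|}{N}$. For $A$ an interval this is a Riemann-sum count, so $\tilde P(A) \approx L(A)$ follows from the fact that the number of grid points in an interval of length $\ell$ is $\ell N + O(1)$, and $\frac{O(1)}{N} \approx 0$; for general measurable $A$ one passes through the standard approximation of $A$ by finite unions of intervals from outside and inside (this is exactly the standard construction of the Loeb measure, and I would simply cite the hyperfinite-model discussion in \cite[Section 2.3]{21c} rather than redo it). To get $\st\, \Omega = [0,1]$ — needed so that $\Omega$ is a legitimate hyperfinite model in the sense of Definition \ref{dhm} — note that every real $r \in [0,1]$ is within $\frac1N \approx 0$ of some grid point $\lfloor rN \rfloor / N \in \Omega$, so $r \in \st\,\Omega$, and conversely $\st\,\Omega \subseteq [0,1]$ since $\Omega \subseteq \ns{[0,1)}$.

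For $\sy^{\phantom{I}}_{\Q}$, the key point is the one already recorded in the excerpt's footnote: for any rational $q \in [0,1)$ we can write $q = \frac{j}{m!}$ for a suitable $j \in \astn$ with $j \le m!$, because the denominator of $q$ divides $m!$ once $m$ is unlimited (indeed once $m$ exceeds that denominator). Hence translation modulo $1$ by $q$ is exactly the permutation $\frac{k}{N} \mapsto \frac{k+j}{N} \bmod 1$ of the index set $\{0,\ldots,N-1\}$, which is a bijection of $\Omega$ onto itself; since $P_\Omega$ is a normalized counting measure it is invariant under any internal bijection of $\Omega$, so $P_\Omega(A + q) = P_\Omega(A)$ for every internal $A$. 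This also gives $\sy^{\phantom{I}}_{|\Omega|}$ as an immediate special case. Finally, $\textbf{Tot}$ — that every subset of the sample space is assigned a probability — holds because the sample space \emph{is} $\Omega$ and the domain of $P_\Omega$ is the family of \emph{internal} subsets of $\Omega$; but one should be careful here, and I expect this to be the main subtlety: if $\textbf{Tot}$ is meant in the strong sense of assigning a probability to \emph{every} (possibly external) subset of $\Omega$, then one instead works with the pushforward $\tilde P$ onto $\mathcal P([0,1))$, which requires the inclusion $[0,1) \subseteq \Omega$ as in the Benci et al.\ construction, and the rational grid above does not contain all reals. The cleanest fix is to take $\Omega$ to be any hyperfinite set satisfying $[0,1) \subseteq \Omega \subseteq \ns{[0,1)}$ that is \emph{refined by} the factorial grid (e.g.\ the union of $\ns{[0,1)} \cap \mathbb{Q}$-translates or a hyperfinite set closed under the relevant shifts), so that $\tilde P$ is total on $\mathcal P([0,1))$, regular since every point of $[0,1)$ lies in $\Omega$, and still $\sy^{\phantom{I}}_{\Q}$-invariant because the rational shifts permute $\Omega$; $\textbf{Co}$ and $\textbf{Un}_h$ are retained as before. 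Reconciling totality in the external sense with the rational-symmetry requirement is the one place where the construction needs genuine care rather than routine verification.
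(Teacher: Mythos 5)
Your construction is correct and is essentially the one the paper relies on: the paper's proof is a citation to \cite[Section 2.3]{21c}, and the equally spaced grid with $|\Omega|=m!$ for unlimited $m$ is exactly the model described in the surrounding text (see the discussion of {\sy}$^{\phantom{I}}_{|\Omega|}$ and {\sy}$^{\phantom{I}}_{\Q}$ just before the proposition). One clarification: in this paper \textbf{Tot} for a hyperfinite measure $P_{\Omega}$ is understood on its domain, i.e., the family of \emph{internal} subsets of $\Omega$ (cf.\ footnote \ref{footnote internal} and the remark after Proposition \ref{prop models 1}), so your first reading is the intended one and the proposed ``fix'' via a hyperfinite set containing $[0,1)$ is unnecessary here --- and would in any case reopen the verification of \textbf{Co}, which for such non-equispaced sets requires the machinery of Benci et al.\ rather than the Riemann-sum count.
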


\begin{proof}
See e.g., Bottazzi and Katz%
\footnote{Bottazzi and Katz \cite[Section 2.3]{21c}.}
and references therein.
\end{proof}

If one renounces the invariance with respect to rotations by an
arbitrary rational angle, it is possible to improve on other
conditions that might be relevant in more sophisticated mathematical
models.

An example is provided by compatibility not only with the Lebesgue
measure, but also with the Hausdorff~$t$-measures for every
$t\in[0,+\infty)$.  This feature shows that hyperfinite measures are
able simultaneously to represent uncountably many Archimedean
measures.  In addition, it might be relevant e.g., in models of
Brownian motion or for analysis on fractals.  For an example of
hyperfinite techniques in this field, see e.g.,
\cite{lindstrom,lindstrom 2}.  Thus, in \cite{lindstrom}, the
Hausdorff dimension of the fractal is related to the asymptotics of
the number of eigenvalues of the infinitesimal generator of the
Brownian motion.  The most recent work in probability and statistics
using nonstandard analysis (NSA) methods includes Alam and Sengupta
\cite{Al23} and Duanmu et al.~\cite{Du23}.

\begin{proposition}
\label{prop models 2b}
There is a hyperfinite set~$\Omega \subseteq \ns{[0,1)}$ such
  that~$P_{\Omega}$ satisfies \emph{\textbf{Reg}},
  \emph{\textbf{Tot}}, \emph{{\Un}}$_{\,h}$, \emph{\textbf{Co}} and
\begin{itemize}
\item[\emph{\textbf{Co}}$^{\phantom{I}}_{H}$]~$P_{\Omega}$ is coherent
  with the Hausdorff~$t$-measures over~$[0,1)$ for
    every~$t\in[0,+\infty)$ in the sense that for
      every~$H^t$-measurable~$A, B \subseteq [0,1)$ (where
        $H^t(B)\in(0,\infty)$), the sets~$A_{\Omega} =\asta \cap
        \Omega$ and~$B_{\Omega} = \astb \cap \Omega$ satisfy the
        relations
		$$\frac{H^t(A \cap B)}{H^t(B)} \approx \frac{P_\Omega(A_\Omega\cap
			B_\Omega)}{P_\Omega(B_\Omega)} = P_\Omega(A_\Omega|B_\Omega).$$ 
	\end{itemize}
\end{proposition}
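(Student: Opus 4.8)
The plan is to produce $\Omega$ by a saturation argument, realizing it as a single hyperfinite set that simultaneously meets an (uncountable) family of internal requirements; all the difficulty is concentrated in verifying \emph{finite} satisfiability of that family, and within that, in a multi-scale placement of points that keeps the Lebesgue scaffold from colliding with the fractal pieces.

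First I would set up the family. Working in the strongly saturated hyperreal field used throughout this section (cf.\ the opening of Section~\ref{sec B-track}), take, for each $t\in[0,\infty)$, each pair of $H^t$-measurable sets $A,B\subseteq[0,1)$ with $H^t(B)\in(0,\infty)$, and each standard $\varepsilon>0$, the internal conditions on a hyperfinite $\Omega\subseteq\ns{[0,1)}$
\[
|\ns B\cap\Omega|>\tfrac1\varepsilon
\qquad\text{and}\qquad
\left|\frac{|\ns A\cap\ns B\cap\Omega|}{|\ns B\cap\Omega|}-\frac{H^t(A\cap B)}{H^t(B)}\right|<\varepsilon ,
\]
together with the clauses ``$\Omega$ is $\varepsilon$-dense in $[0,1]$'' (one per standard $\varepsilon$). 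Each is internal, since $\ns A,\ns B$ are fixed internal parameters and $H^t(A\cap B)/H^t(B)$ is standard; and for $t>1$ there is no such $B\subseteq[0,1)\subseteq\R$, so only $t\in[0,1]$ contributes anything. If the family is finitely satisfiable, saturation delivers a single $\Omega$ realizing all of it, and then: \textbf{Un}$_{\,h}$, \textbf{Reg} and \textbf{Tot} hold automatically for the counting measure $P_\Omega$ (cf.\ the discussion following Definition~\ref{dhm}); the density clauses give $\st\,\Omega=[0,1]$ by a routine overspill; the case $t=1$, $B=[0,1)$ (where $H^1=L$ and $\ns B\cap\Omega=\Omega$) yields $P_\Omega(\ns A\cap\Omega)\approx L(A)$ for every Lebesgue measurable $A$, i.e.\ \textbf{Co}; and the displayed conditions, read over all standard $\varepsilon$, say exactly that $P_\Omega(B_\Omega)>0$ and $P_\Omega(A_\Omega|B_\Omega)\approx H^t(A\cap B)/H^t(B)$, i.e.\ \textbf{Co}$_H$.

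Next I would prove finite satisfiability. Fix finitely many triples $(t_i,A_i,B_i)$ with $0<H^{t_i}(B_i)<\infty$ and a standard $\varepsilon>0$; the aim is a \emph{standard} finite $F\subseteq[0,1)$, for then $\ns F=F$ is hyperfinite and $|\ns C\cap\ns F|=|C\cap F|$ for every standard $C$, so the chosen conditions hold for $\Omega=\ns F$. The points of $F$ are placed in groups indexed by the distinct dimensions occurring, $1\ge s_0>s_1>\dots>s_q\ge0$ (always including $s_0=1$); set $N=\bigcup\{B_i:t_i<1\}$, a Lebesgue-null set. For the top group, let $F^{(0)}$ be a union of finite sets $F_l\subseteq C_l\setminus N$, with $C_l$ ranging over the atoms of the finite algebra generated by all $A_i,B_i,A_i\cap B_i$ having $t_i=1$ and $|F_l|$ within $1$ of $L(C_l)\,M$ for a large integer $M$; this is possible because a positive-measure set minus the null set $N$ is uncountable. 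For each group $s_a$ with $0<s_a<1$, pool the $B_i$ of that dimension, form the finite algebra on their union generated by these $B_i$ and the corresponding $A_i$, and for each atom $D$ with $H^{s_a}(D)>0$ pick a finite $F_D\subseteq D$ with $|F_D|$ within $1$ of $c_a\,H^{s_a}(D)$, taking $F_D$ disjoint from the (finitely many, $H^{s_a}$-null) sets $B_j$ with $t_j<s_a$ and from everything chosen so far; again possible, since such a $D$ minus an $H^{s_a}$-null set and finitely many points is uncountable. For the group $s_a=0$, if present, simply adjoin all points of the finitely many finite sets $B_i$ with $t_i=0$. Put $F=F^{(0)}\cup\bigcup_{a\ge1}F^{(a)}$, where $F^{(a)}=\bigcup_D F_D$.

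The step I expect to be the real obstacle is making the groups non-interfering, and this is controlled entirely through the constants: choose $c_q,c_{q-1},\dots,c_1$ in that order (each enormous relative to the ones already chosen, to $1/\varepsilon$, and to the finitely many fixed standard quantities $H^{s_a}(\,\cdot\,)$ that appear), and only afterwards choose $M$ enormous relative to $\sum_{a\ge1}|F^{(a)}|$ and to $1/\varepsilon$. Then $|F|\approx|F^{(0)}|\approx M$; the global counts obey $|F\cap A|/|F|\approx L(A)$ for the $t=1$ sets $A$, since $F^{(0)}$ is $L$-equidistributed off $N$ and the remaining groups form a vanishing fraction; a $1$-dimensional $B_i$ satisfies $F\cap B_i\approx F^{(0)}\cap B_i$, which is $L$-equidistributed on it; and for $t_i<1$, the set $F^{(0)}$ and every higher-dimensional group were built disjoint from $B_i$ (their intersections with $B_i$ being null in the larger dimension), while each strictly lower-dimensional group contributes at most a fraction $c_{a'}/c_a$, arranged below $\varepsilon$, of $F\cap B_i$; hence $F\cap B_i$ agrees up to an $\varepsilon$-fraction with the $H^{t_i}$-equidistributed set $F^{(a)}\cap B_i$. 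Overlaps among fractals of the \emph{same} dimension are harmless, since all of that group's points sit at the single $H^{s_a}$-density $c_a$, so the normalized restriction to each such $B_i$ is $H^{t_i}|_{B_i}/H^{t_i}(B_i)$. Thus $F$ realizes the chosen finite subfamily up to a bounded multiple of $\varepsilon$, and $\ns F$ is the hyperfinite witness. One last bookkeeping point behind the saturation step: the whole family has size bounded by the number of $H^t$-measurable subsets of $[0,1)$ taken over $t\in[0,1]$, so it must be realized in a hyperreal field of correspondingly large saturation degree --- precisely the strengthening of the saturation hypothesis noted at the opening of Section~\ref{sec B-track}.
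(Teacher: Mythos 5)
Your route is genuinely different from the paper's: the paper settles Proposition~\ref{prop models 2b} by citing the main theorem of Wattenberg \cite{watt} (together with the discussion in \cite[Section 2.5]{21c}), whereas you rebuild the witness from scratch, encoding the coherence requirements as a family of internal conditions and verifying the finite intersection property with a multi-scale standard finite set: points placed atom-by-atom at density $c_a$ per unit of $H^{s_a}$-mass, higher-dimensional groups kept off the lower-dimensional (null) sets, the constants chosen in increasing order of dimension with $c_1\gg c_2\gg\cdots\gg c_q$ and $M$ chosen last. That skeleton is sound and self-contained, and it makes visible exactly where the saturation hypothesis enters; the price is that your family is indexed by \emph{all} $H^t$-measurable pairs, so you need a correspondingly high degree of saturation (more than would be needed to simply invoke Wattenberg), which the section's standing hypothesis on saturation tolerates.

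Two points need repair, one of which is a genuine (though local and fixable) failure. The clause $|\ns B\cap\Omega|>1/\varepsilon$ cannot be kept for $t=0$: there $H^0(B)\in(0,\infty)$ forces $B$ to be a finite standard set, so $\ns B=B$ and $|\ns B\cap\Omega|\le|B|$, and the single condition with $\varepsilon<1/|B|$ is unsatisfiable; hence the family exactly as you wrote it does \emph{not} have the finite intersection property. Replace that clause by $\ns B\cap\Omega\neq\emptyset$ (which is also what is needed to make the ratio condition well defined); nothing stronger is required, since nonemptiness already gives $P_\Omega(B_\Omega)>0$. Second, the $\varepsilon$-density clauses must also be witnessed at the finite stage: the $F$ you build need not be $\varepsilon$-dense, since the points within each atom may cluster. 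Adjoining a standard finite $\varepsilon$-net to $F$ fixes this at the cost of $O(1/\varepsilon)$ extra points, which is negligible against the $c_a$'s and $M$ and so does not disturb any of the ratio estimates. With these two adjustments your argument goes through and yields the same conclusion the paper obtains by citation.
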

\begin{proof}
This compatibility result is a consequence of the main theorem of
Wattenberg \cite{watt}.%
\footnote{See also Bottazzi and Katz \cite[Section 2.5]{21c}.}
\end{proof}

Another compatibility result improves on the invariance of the
hyperfinite measure in an algebra of subsets of~$[0,1)$ that does not
  include any nonempty null sets.

\begin{proposition}
\label{prop models 1}
For every algebra of Lebesgue measurable sets~$\mathfrak B$ such that
\begin{enumerate}
\item 
\label{551}
for every~$A \in \mathfrak{B}$ either~$A = \emptyset$ or the Lebesgue
measure of~$A$ is positive;
\item~$\mathfrak B$ includes all intervals of the form~$[a,b)
  \subseteq [0,1)$;
\end{enumerate}
there is a hyperfinite set~$\Omega \subseteq \ns{[0,1)}$ such
  that~$P_{\Omega}$ satisfies \emph{\textbf{Reg}},
  \emph{\textbf{Tot}}, \emph{{\Un}}$_{\,h}$, \emph{\textbf{Co}} and
  the property
\begin{itemize}
\item[\emph{{\Un}}$^{\phantom{I}}_{\mathfrak B}$]~$P_\Omega$ is
  uniform over sets of~$\mathfrak B$,
  i.e.,~${P}(\ns\!A\cap\Omega)={P}(\ns B \cap \Omega)$ whenever~$A, B
  \in \mathfrak B$ satisfy~$L(A) = L(B)$.
\end{itemize}
\end{proposition}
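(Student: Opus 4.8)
The plan is to construct $\Omega$ by a saturation argument that reduces the entire statement to a single finite combinatorial lemma. I would work in a nonstandard model whose degree of saturation exceeds the cardinality of the family $\mathcal{L}$ of Lebesgue measurable subsets of $[0,1)$ --- this is the ``stronger saturation'' flagged at the head of this section; a \emph{fine} ultrafilter on the directed set of finite approximations would serve equally well. Let $\mathrm{Fin}$ denote the set of nonempty finite subsets of $[0,1)$. For each point $x\in[0,1)$, each pair $(A,n)$ with $A\in\mathcal{L}$ and $n\in\N$, and each pair $A,B\in\mathfrak B$ with $L(A)=L(B)$, form the internal subsets of $\ns{\mathrm{Fin}}$ given by $D_x=\{\omega:x\in\omega\}$, by $C_{A,n}=\bigl\{\omega:\bigl|\,|\omega\cap\ns A|/|\omega|-L(A)\,\bigr|<1/n\bigr\}$, and by $E_{A,B}=\{\omega:|\omega\cap\ns A|=|\omega\cap\ns B|\}$. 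Any hyperfinite $\Omega$ lying in the intersection of all of these sets automatically satisfies $[0,1)\subseteq\Omega\subseteq\ns{[0,1)}$ (so that, the standard part of a bounded internal set being closed, $\st\Omega=[0,1]$ and $\Omega$ is a hyperfinite model of the spinner in the sense of Definition~\ref{dhm}); consequently \textbf{Reg} and \textbf{Tot} hold for the induced $\tilde P$ on $\mathcal P([0,1))$; {\Un}$_{\,h}$ holds by the definition of the counting measure; \textbf{Co} holds because $\Omega\in C_{A,n}$ for every $n$; and {\Un}$_{\mathfrak B}$ holds because $\Omega\in E_{A,B}$. So it remains to verify that this family has the finite intersection property and invoke saturation.

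Because a finite subset $\omega\subseteq[0,1)$ is a standard object, with $\ns\omega=\omega$ and $\ns A\cap\omega=A\cap\omega$, realizing finitely many members of the family at once amounts to the following \emph{finite combinatorial lemma}: given a finite $E\subseteq[0,1)$, a finite family $\mathcal F\subseteq\mathcal L$, and $\varepsilon>0$, there is a finite $\omega\subseteq[0,1)$ with $E\subseteq\omega$; with $\bigl|\,|\omega\cap A|/|\omega|-L(A)\,\bigr|<\varepsilon$ for every $A\in\mathcal F$; and with $|\omega\cap A|=|\omega\cap B|$ whenever $A,B\in\mathcal F\cap\mathfrak B$ satisfy $L(A)=L(B)$.

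To prove the lemma, let $Q_1,\dots,Q_m$ be the nonempty atoms of the finite subalgebra of $\mathfrak B$ generated by $\mathcal F\cap\mathfrak B$. Each $Q_j$, being a Boolean combination of members of $\mathfrak B$, lies in $\mathfrak B$; so by hypothesis~(\ref{551}) one has $q_j:=L(Q_j)>0$, and $\sum_jq_j=1$. This positivity is exactly what reconciles the \emph{exact} cardinality equalities demanded by {\Un}$_{\mathfrak B}$ with the merely \emph{approximate} proportionality demanded by \textbf{Co}. Fix a $\Q$-basis $b_1,\dots,b_d$ of $\mathrm{span}_\Q(q_1,\dots,q_m)$, write $q_j=\sum_i c_{ji}b_i$ with $c_{ji}\in\Q$, and let $D\in\N$ clear all their denominators. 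For a large real $K$, take $M_i$ to be the nearest multiple of $D$ to $Kb_i$ and set $N_j:=\sum_i c_{ji}M_i\in\Z$. Then $|N_j-Kq_j|\le (D/2)\sum_i|c_{ji}|$ is bounded independently of $K$, so $N_j>0$ for $K$ large (here $q_j>0$ is used), and, with $N:=\sum_jN_j$, one has $N_j/N\to q_j$; moreover, since the $b_i$ are $\Q$-independent, $\sum_{j\in S}q_j=\sum_{j\in T}q_j$ forces $\sum_{j\in S}c_{ji}=\sum_{j\in T}c_{ji}$ for every $i$, hence $\sum_{j\in S}N_j=\sum_{j\in T}N_j$. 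Thus prescribing $|\omega\cap Q_j|=N_j$ already secures all the {\Un}$_{\mathfrak B}$-equalities on the nose. It remains, inside each $Q_j$, to distribute the $N_j$ points among the atoms of the algebra generated by $\mathcal F$ that lie in $Q_j$ --- these atoms refine the partition $\{Q_1,\dots,Q_m\}$, so every $A\in\mathcal F$ is a union of them: to each such atom of positive Lebesgue measure $r$ allot a count within $1$ of $rN$ while including the finitely many points of $E$ it contains (an uncountable set accommodates them), to each null atom allot exactly its points in $E$, and absorb the resulting $O(1)$ discrepancy by adjusting one positive-measure atom of $Q_j$ (one exists since $q_j>0$). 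For $K$ large this keeps $|\omega\cap A|$ within $\varepsilon|\omega|$ of $L(A)|\omega|$ for every $A\in\mathcal F$, while $|\omega\cap Q_j|=N_j$ and $E\subseteq\omega$ hold exactly; the lemma, and with it the proposition, follows.

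The crux --- and the chief obstacle --- is the finite combinatorial lemma: one must honour the exact cardinality identities imposed by {\Un}$_{\mathfrak B}$ while keeping every count nearly proportional to Lebesgue measure and still accommodating the prescribed points, some of which may lie in atoms of measure zero. The $\Q$-linear (lattice) manoeuvre on the atom-measures $q_j$, available precisely because of hypothesis~(\ref{551}), is what dissolves this tension; hypothesis~(2) plays only the complementary role of ensuring that {\Un}$_{\mathfrak B}$ is a substantive uniformity condition --- through the intervals it forces into $\mathfrak B$, it guarantees that equal-length subintervals of $[0,1)$ receive exactly equal chance.
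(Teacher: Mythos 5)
Your proof is correct, but it takes a genuinely different route from the paper: the paper's proof of Proposition~\ref{prop models 1} is a one-line appeal to Theorem~2.2 of Benci, Bottazzi and Di Nasso \cite{bbd} on elementary numerosities, applied to the Lebesgue measure (see also \cite{bbd2}), whereas you give a self-contained construction. Your reduction --- encoding $[0,1)\subseteq\Omega$, the coherence estimates and the exact equalities as internal subsets of $\ns{\mathrm{Fin}}$, checking the finite intersection property, and invoking saturation beyond $2^{\mathfrak{c}}$ (the number of constraint sets is at most $|\mathcal L|=2^{\mathfrak{c}}$, so this suffices) --- together with the $\Q$-linear rounding of the atom measures $q_j$ essentially reconstructs, in the special case of Lebesgue measure, the combinatorial core of the cited numerosity theorem; the rounding step is exactly where hypothesis~(\ref{551}) enters, since $q_j>0$ both keeps the prescribed counts positive and reconciles the exact integer equalities demanded by {\Un}$_{\mathfrak B}$ with the approximate proportionality demanded by \textbf{Co}. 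What the citation buys is brevity and generality (the theorem of \cite{bbd} covers arbitrary finitely additive measures); what your argument buys is transparency about where each hypothesis is used and about the saturation actually required. Two small remarks: your reading of \textbf{Reg} and \textbf{Tot} via the induced $\tilde P$ on $\mathcal P([0,1))$, obtained by forcing $[0,1)\subseteq\Omega$ through the sets $D_x$, is a harmless strengthening, since on the paper's conventions $P_\Omega$ is automatically regular and total on the internal subsets of $\Omega$; and your parenthetical that a fine ultrafilter on finite approximations ``would serve equally well'' is slightly loose, because fineness only secures the constraints $D_x$ --- what is needed (and what your finite combinatorial lemma in fact delivers, via the finite intersection property) is an ultrafilter containing all the sets $C_{A,n}$ and $E_{A,B}$ as well.
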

\begin{proof}
This compatibility result is a consequence of Theorem 2.2 of Benci et
al.~\cite{bbd} applied to the Lebesgue measure.%
\footnote{See Benci et al.\ \cite[Section 3]{bbd2}.  See also
  \cite[Section~2.5]{21c}.}
\end{proof}

Note that the measure $P_{\Omega}$ provided by Proposition~\ref{prop
  models 1} is defined on all internal subsets of $\Omega$.

Since most symmetry arguments for non-regular probabilities involve
non-empty null sets, hypothesis \eqref{551} of Proposition \ref{prop
  models 1} can be regarded as a weakness of the compatibility
condition {\Un}$^{\phantom{I}}_{\mathfrak B}$.  Still, the condition
is often overlooked by the detractors of infinitesimal probabilities
in the discussion of the properties of non-Archimedean models of
physical phenomena.%
\footnote{In their assessment of hyperreal chances, neither Parker nor
  Pruss addresses the additional invariance provided by
  {\Un}$^{\phantom{I}}_{\mathfrak B}$.}
Notice that this condition entails invariance with respect to
rotations by an arbitrary real angle for sets in~$\mathfrak B$
(namely, a restricted form of {\sy}$_{\R}$), improving on the
weaker translation invariance of other hyperfinite measures.

Additionally, due to hypothesis (2) on~$\mathfrak{B}$, a
non-Archi\-medean model
satisfying~{\Un}$^{\phantom{I}}_{\mathfrak B}$ satisfies also
the Archimedean notion of uniformity ``intervals of the same length
have the same chance.''%
\footnote{See Definition~\ref{f9}.}

The results discussed above are sufficient to obtain the following
well-known result, analogous to Corollary \ref{a-track
  underdetermination}.

\begin{corollary}\label{b-track underdetermination}
	Hypotheses \emph{\textbf{Reg}}, \emph{\textbf{Tot}},
        \emph{{\Un}}$_{\,h}$ and \emph{\textbf{Co}} are
        insufficient to uniquely determine a probability model of
        chance for the spinner.
\end{corollary}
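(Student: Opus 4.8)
The plan is to exhibit at least two genuinely distinct hyperfinite models that each satisfy all four of the listed hypotheses \textbf{Reg}, \textbf{Tot}, {\Un}$_{\,h}$ and \textbf{Co}, thereby showing these hypotheses cannot pin down a unique model. First I would invoke the compatibility results already established in the excerpt: Proposition~\ref{prop models 2b} produces a hyperfinite set~$\Omega_1\subseteq\ns{[0,1)}$ whose counting measure~$P_{\Omega_1}$ satisfies \textbf{Reg}, \textbf{Tot}, {\Un}$_{\,h}$, \textbf{Co} and the extra Hausdorff-compatibility condition~$\textbf{Co}_H$; while Proposition~\ref{prop models 1}, applied to a suitable algebra~$\mathfrak{B}$ of positive-measure Lebesgue sets containing all half-open subintervals, produces a hyperfinite set~$\Omega_2\subseteq\ns{[0,1)}$ whose counting measure~$P_{\Omega_2}$ satisfies \textbf{Reg}, \textbf{Tot}, {\Un}$_{\,h}$, \textbf{Co} and the uniformity-on-$\mathfrak{B}$ condition {\Un}$_{\mathfrak{B}}$. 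Both~$P_{\Omega_1}$ and~$P_{\Omega_2}$ satisfy the four hypotheses in the corollary, so it suffices to show they are not the same model.

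Second, I would argue that~$P_{\Omega_1}\neq P_{\Omega_2}$. The cleanest route is to note that~$|\Omega_1|$ and~$|\Omega_2|$ need not be equal — indeed, the constructions underlying the two propositions impose incompatible divisibility or spacing requirements on the cardinality of the hyperfinite set (for instance, the Hausdorff-compatible model requires a cardinality divisible by suitable factorials or by powers controlling the dyadic structure, whereas the~$\mathfrak{B}$-uniform model requires cardinality divisible by all the relevant ratios of Lebesgue measures of sets in~$\mathfrak{B}$). Since the value~$P_\Omega(\ns\!A\cap\Omega)$ of a hyperfinite model on a fixed internal set~$A$ is the hyperrational~$|A\cap\Omega|/|\Omega|$, choosing~$A$ to be a single point~$\{x\}$ with~$x\in\Omega_1\cap\Omega_2$ already distinguishes the two models whenever~$|\Omega_1|\neq|\Omega_2|$, because~$P_{\Omega_1}(\{x\})=1/|\Omega_1|\neq 1/|\Omega_2|=P_{\Omega_2}(\{x\})$. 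Even if one insists the cardinalities coincide, one may distinguish the models via their extra invariance properties: $P_{\Omega_1}$ respects the Hausdorff~$t$-measures (so conditional probabilities on fractal sets match the fractal ratios), whereas a model built merely to satisfy {\Un}$_{\mathfrak{B}}$ has no reason to, and a concrete fractal~$A$ of intermediate Hausdorff dimension furnishes an internal set on which the two conditional probabilities differ by a non-infinitesimal amount.

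Third, to make the argument airtight I would record explicitly that \textbf{Reg}, \textbf{Tot} and {\Un}$_{\,h}$ hold for \emph{every} hyperfinite model of the form in Definition~\ref{dhm} (this is noted in footnote~\ref{footnote internal} and Definition~\ref{f9}), so the only nontrivial hypothesis among the four is \textbf{Co}, and both~$\Omega_1$ and~$\Omega_2$ satisfy \textbf{Co} by the cited propositions. Hence nothing in the list rules out either model, and since the two models are distinct, the hypotheses do not determine a unique one.

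The main obstacle I anticipate is not logical but bookkeeping: ensuring that the two hyperfinite sets produced by Propositions~\ref{prop models 2b} and~\ref{prop models 1} really can be taken with different internal cardinalities (or, failing that, pinning down a concrete internal set witnessing a non-infinitesimal difference in conditional probabilities). In practice this is handled by the freedom in the constructions — the hyperfinite counting measures are built by saturation arguments that leave~$|\Omega|$ only constrained up to divisibility, so two incompatible divisibility demands immediately force~$|\Omega_1|\neq|\Omega_2|$. Given that, the corollary follows with essentially no further work, which is presumably why the excerpt labels it ``well-known.''
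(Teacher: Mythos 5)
Your proposal is correct and takes essentially the same route as the paper, which offers no separate argument beyond observing that ``the results discussed above'' (the compatibility Propositions~\ref{prop models 2b} and~\ref{prop models 1}) already supply more than one hyperfinite model satisfying \textbf{Reg}, \textbf{Tot}, {\Un}$_{\,h}$ and \textbf{Co}. Your extra bookkeeping on distinctness goes beyond what the paper records; the shortest witness is simply that the hyperfinite sample space~$\Omega$ is far from unique (its internal cardinality, for instance, can be varied freely subject to the divisibility constraints), and since the paper notes that a uniform hyperfinite model is determined by~$\Omega$, distinct choices of~$\Omega$ yield distinct models.
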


However, as already pointed out in \cite[Section 3.4]{21c},
uniform hyperfinite models of the spinner are uniquely determined by
the hyperfinite sample space~$\Omega$ (see also Definition \ref{dhm}).

\subsection{Infinitesimal models of spinner without Axiom of Choice}
\label{section SCOT}

In Section~\ref{section is Lebesgue total} we mentioned Solovay's
model of~$\R$ where the Lebesgue measure satisfies~{\sy}$_{\R}$,
{\Un} and \textbf{Tot}, and furthermore ADC is satisfied.

Hrbacek and Katz developed a foundational framework SCOT, a subsystem
of both Nelson's IST and Hrbacek's HST (for a brief discussion of IST
and HST see Section~\ref{appendix nelson}).  The theory SCOT is a
conservative extension of ZF+ADC.%
\footnote{Conservativity means that SCOT proves the same theorems as
ZF+ADC.}
In SCOT, it is possible to develop a (hyper)finite counting measure
satisfying \textbf{Reg}
and \textbf{Co}.%
\footnote{The construction is detailed in \cite[Section~3]{21e}.}
%
%

Furthermore, the axiom \textbf{Tot} can be satisfied, as well, in the
following sense.  Recall that the Lebesgue measure in Solovay's model
satisfies \textbf{Tot}.  For the purposes of the following theorem, we
will use \textbf{Tot} to denote the assertion that the Lebesgue
measure is total.

\begin{theorem}
The theory\, {\rm SCOT+\textbf{Tot}} is a conservative extension of
the theory {\rm ZF+ADC+\textbf{Tot}}.
\end{theorem}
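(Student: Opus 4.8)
The plan is to establish conservativity by combining the conservativity of SCOT over ZF+ADC (already cited to \cite{21e}) with the fact that \textbf{Tot} --- understood here as the single statement ``the Lebesgue measure is total,'' i.e.\ every set of reals is Lebesgue measurable --- is a sentence in the language of ZF, not in the enriched language with the predicate \emph{standard}. Conservativity of a theory $T'$ over a theory $T$ in a sublanguage means that every sentence of the smaller language provable in $T'$ is already provable in $T$. The key structural observation is that adding a sentence $\sigma$ in the \emph{smaller} language to both sides preserves conservativity: if $T'$ is conservative over $T$, then $T'+\sigma$ is conservative over $T+\sigma$. This is the whole content of the theorem, and it is essentially a soft model-theoretic / proof-theoretic lemma.

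First I would recall precisely the conservativity result for SCOT from \cite{21e}: SCOT is a conservative extension of ZF+ADC, meaning that for every sentence $\phi$ in the language of ZF, if $\text{SCOT}\vdash\phi$ then $\text{ZF+ADC}\vdash\phi$. Second, I would invoke the general lemma: if $T'$ conservatively extends $T$ (with $T$ formulated in a sublanguage $\mathcal L\subseteq\mathcal L'$), and $\sigma$ is any $\mathcal L$-sentence, then $T'+\sigma$ conservatively extends $T+\sigma$. The proof of this lemma is routine: suppose $\phi\in\mathcal L$ and $T'+\sigma\vdash\phi$; then by the deduction theorem $T'\vdash(\sigma\to\phi)$; since $\sigma\to\phi$ is an $\mathcal L$-sentence and $T'$ is conservative over $T$, we get $T\vdash(\sigma\to\phi)$, hence $T+\sigma\vdash\phi$. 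Third, I would apply this lemma with $T'=\text{SCOT}$, $T=\text{ZF+ADC}$, and $\sigma=\textbf{Tot}$ (the ZF-sentence ``every set of reals is Lebesgue measurable''), yielding that $\text{SCOT}+\textbf{Tot}$ is a conservative extension of $\text{ZF+ADC}+\textbf{Tot}$.

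The one point that genuinely requires care --- and the step I expect to be the main obstacle --- is verifying that \textbf{Tot}, as used in this theorem, really is expressible as a sentence in the language of ZF alone, with no essential occurrence of the \emph{standard} predicate, and that its meaning in SCOT coincides with its meaning in ZF+ADC. The paper flags this by saying ``we will use \textbf{Tot} to denote the assertion that the Lebesgue measure is total,'' precisely to pin down that we mean the internal/classical statement about all sets of reals, not some nonstandard counting-measure version. I would make explicit that SCOT interprets ordinary ZF-formulas with their standard meaning (SCOT extends ZF, and its ZF-reduct satisfies ZF+ADC by the conservativity result), so that the sentence \textbf{Tot} has an unambiguous reading shared by both theories; this is what makes the deduction-theorem argument legitimate. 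Once that is settled, the rest is purely formal and the theorem follows immediately.

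Concretely, the proof would read roughly as follows. Recall that SCOT is a conservative extension of ZF+ADC \cite[Section~3]{21e}; that is, every theorem of SCOT in the language of ZF is already a theorem of ZF+ADC. The sentence \textbf{Tot} (``every set of reals is Lebesgue measurable'') belongs to the language of ZF. Now suppose $\phi$ is any sentence in the language of ZF with $\text{SCOT}+\textbf{Tot}\vdash\phi$. By the deduction theorem, $\text{SCOT}\vdash(\textbf{Tot}\to\phi)$. Since $\textbf{Tot}\to\phi$ is a sentence in the language of ZF, conservativity of SCOT over ZF+ADC gives $\text{ZF+ADC}\vdash(\textbf{Tot}\to\phi)$, hence $\text{ZF+ADC}+\textbf{Tot}\vdash\phi$. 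As the converse inclusion of theorems is immediate (SCOT extends ZF), this proves that $\text{SCOT}+\textbf{Tot}$ is a conservative extension of $\text{ZF+ADC}+\textbf{Tot}$.
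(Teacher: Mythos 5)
Your proof is correct and follows essentially the same route as the paper: both rest on the conservativity of SCOT over ZF+ADC together with the observation that \textbf{Tot} (``the Lebesgue measure is total'') is a sentence of the $\in$-language, so that conservativity transfers to the augmented theories. The paper merely cites the analogous argument for SCOT+CH over ZF+CH from Hrbacek--Katz, whereas you make the underlying soft lemma explicit via the deduction theorem; that is a legitimate and fully rigorous way to fill in the same step.
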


\begin{proof}
This follows from the result ``SCOT is a conservative extension of
ZF+DC'' just as in \cite[Section 8.6]{21e}, where it was
shown that SCOT+CH is a conservative extension of ZF+CH.
\end{proof}

Thus one can do infinitesimal analysis in the assumption of
\textbf{Tot} just as one can do Archimedean analysis in that
assumption, in Solovay's model.

\subsection{Assessment of the B-track models}
\label{sec conclusions btrack}

One of the common contentions concerning hyperfinite models is that
they are not uniquely determined.%
\footnote{See e.g., Barrett \cite[pp.\;71--72]{barrett10}, Pruss
  \cite[Section 3]{Pr21a}).}
In response, Bottazzi and Katz showed in \cite[Section~2.3]{21c} that
distinct hyperfinite models are compatible in a sense that resembles
the isomorphism of Archimedean models based upon the representation of
the spinner with a different sample space, such as~$[0,2\pi)$
  or~$[0,360)$.  Moreover, hyperfinite representations of the spinner
    are uniquely determined by the hyperfinite set~$\Omega$.  In
    contrast, the Archimedean models discussed in Section \ref{sec
      A-track}, dependent on nonconstructive principles (such as the
    Hahn--Banach theorem), cannot be characterized as easily.

Barrett's critique of the `sprinkle spinner', namely a hyperfinite
spinner that contains all the real numbers, is similarly flawed.  The
`sprinkle spinner' is based on the measure~$\tilde{P}$ obtained from
a hyperfinite model where the sample space
satisfies~$[0,1)\subseteq\Omega$.  Barrett claims that this model
  features ``as much translation invariance as can be gotten''%
\footnote{Barrett \cite[p.\;72]{barrett10}.}
(namely, {\sy}$^{\phantom{I}}_{\Q}$).  However, condition
{\Un}$^{\phantom{I}}_{\mathfrak B}$ discussed in Proposition
\ref{prop models 1} (developed some years after Barrett's assessment)
shows that indeed it is possible to obtain an additional translation
invariance on a relevant family of subsets.

%
%
%
%

Finally, Barrett observes that the sprinkle spinner is an external
model, since it relies on a probability of the form~$\tilde{P}$
instead of~$P_{\Omega}$.  Accordingly, he claims that it ``represents
a big loss of instrumental virtue'' over internal models.%
\footnote{Barrett \cite[p.\;73]{barrett10}.}
Indeed, external entities are not subject to the transfer principle.%
\footnote{See Section~\ref{appendix a2}.}
However, there are some external models, such as the sprinkle spinner,
Benci's non-Archi\-medean probabilities and the Loeb measures, that
can be obtained from suitable internal models.  In the case of the
sprinkle spinner and Benci's non-Archi\-medean probabilities, the only
external object that is used is the domain of~$\tilde{P}$.%
\footnote{See also the discussion in \cite[Sections 3.2
    and~3.4]{21c}.}
Thus the `instrumental virtue' provided by the original internal
probability~$P$ is still present, contrary to Barrett's claim.

\subsection{A regular counterpart of the minimal Archimedean model}
\label{sec minimal na model}

As we saw in Section \ref{sec minimal model}, the hypotheses
{\sy}$_{\R}$, {\Un} and {\Po} are sufficient to single out a unique
\emph{minimal} Archimedean model.  It turns out that an analogous
situation exists for non-Archimedean models.  Start by applying
Proposition~\ref{prop models 1} with the choice of~$\mathfrak{B}$ as
the algebra generated by half-open intervals of the form~$[x,x+a)$.
  The corresponding~$P_\Omega$ is defined on all internal subsets of
  $\Omega$.  We restrict $P_\Omega$ to sets of the form
  $\asta\cap\Omega$, with $A$ belonging to the algebra~$\mathcal{M}$
  generated by intervals (not necessarily half-open).  Then the
  resulting measure is regular and it is compatible (in the sense of
  property {\co} of Definition \ref{dhm}) with the minimal Archimedean
  measure over the algebra generated by half-open intervals.  Thus the
  minimal Archimedean model admits a coherent non-Archimedean regular
  counterpart that has the property {\Un}$^{\phantom{I}}_{\mathfrak
    B}$, equivalent to the restriction of {\sy}$_{\R}$ to the algebra
  generated by half-open intervals. In other words, the regular
  non-Archimedean model has the same amount of translation invariance
  as the minimal Archimedean model modulo taking into account the
  existence of null sets in~$\mathcal{M}$.

As a consequence, recovering uniqueness via minimality is possible in
both Archimedean and non-Archimedean settings.  The non-Archimedean
model, while dependent on the choice of~$\Omega$, also enables
regularisation that does not feature appreciable `untenable
asymmetry'.%
\footnote{Parker \cite[p.\;2]{Pa21}.}
%
By
\cite{Bo21}, this result can be obtained within the same foundational
framework of the minimal Archimedean model~$\mathcal{M}$ of the
spinner.

\section{Parker's haecceitistic critique of regular chances}
\label{s55}

Commenting on the exchange between Williamson \cite{Wi07} and
Weintraub \cite{weintraub}, Parker claims the following:
\begin{enumerate}\item[]
``[T]he latter sort of system [where an infinite sequence of heads has
    infinitesimal chance] has clear \emph{theoretical vices}: The
  additional complexity of hyperreal numbers and of dependence on
  extrinsic, haeccetitistic [sic] features, and, arguably, arbitrary
  and potentially misleading excess structure (Parker, 2013, 2019;
  Pruss, 2013, 2021a).''%
\footnote{Parker \cite[p.\;9]{Pa21}; emphasis added.}
\end{enumerate}
Parker's claim of `theoretical vices' has at least four distinct
components:
\begin{enumerate}
[label={(Pa\theenumi)}]
\item
\label{i1}
an undesirable complexity of the hyperreal numbers;
\item
\label{i2}
dependence on haecceitistic features;
\item
\label{i3}
arbitrary and misleading excess structure;
\item
\label{i4}
citation of four articles in the literature: Parker \cite{Pa13},
\cite{Pa19} and Pruss \cite{Pr13}, \cite{Pr21a}.
\end{enumerate}
We will analyze items \ref{i1} and \ref{i3} in Section~\ref{s61}, item
\ref{i2} in Section~\ref{f87}, and item~\ref{i4} in Section~\ref{s63}.

\subsection{`Undesirable complexity' and `excess structure'}
\label{s61}

Let us first consider items \ref{i1} and \ref{i3} (we will return to
item \ref{i2} below).  Parker's critique of NSA approaches to
probability has a laser focus on the extension approaches, when~$\R$
is extended to~$\astr$.  However, in addition to such a
model-theoretic approach, there are also axiomatic/syntactic
approaches, such as those of Hrbacek, Nelson, and others.%
\footnote{See Section \ref{f87} and Section~\ref{appendix nelson}.}
Axiomatic approaches are typically based on a \mbox{st-$\in$}-language
rather than the~$\in$-language of traditional set theories (ZF, ZFC,
etc.).  The fundamental point is that these approaches are
conservative over the traditional frameworks.  In such approaches,
infinitesimals are found within~$\R$ itself rather than in an
extension thereof (they have not been detected by traditional
mathematicians who do not use the predicate `st').  Since one is still
working in the field~$\R$, there is little room for criticisms of
`excess structure' type.

In the axiomatic approach to NSA following Nelson, the resource
provided by the predicate st was there all along in~$\R$; it was
merely not used by classically trained mathematicians.  From this
perspective, a critic of the axiomatic approach to NSA as applied to
physical modeling would not be attacking the addition of `excess
structure', but rather the use of a particular type of resource
already present in~$\R$.  While few scholars would object to
exploiting e.g., the well-known resource of~$\R$ known as
completeness, Parker could still press his objection to the use of
another type of resource of the real numbers, namely the predicate st.
However, such an objection is less compelling than criticizing an
actual extension of the traditional number system, with the attendant
risks of non-uniqueness, underdetermination, etc., since few scholars
would attack~$\R$ as non-unique or underdetermined.

Thus Parker's criticism of `theoretical vices', `undesirable
complexity', and `arbitrary and misleading excess structure' appears
to be dependent on a specific (namely model-theoretic) approach to
NSA.  The fact that the criticism is off-target for the axiomatic
approaches indicates that Parker's criticism does not address the
substance of the infinitesimal approach; see further in
Section~\ref{f87}.

\subsection{Axiomatic approaches}
\label{f87}

To illustrate the fact that Parker's criticism misses its target,
consider the problem of an infinity of heads for an infinity of coin
tosses (referred to in Parker's comment quoted above).  It turns out
that this problem has a solution in the framework of the axiomatic
theory BSPT$'$,%
\footnote{See Hrbacek and Katz \cite[p.\,10]{21e}.}
where the standard reals can be included in a finite (nonstandard)
set~$F$,%
\footnote{The technical term for the nonstandard integer number of
elements of~$F$ is \emph{unlimited}.  An unlimited number is greater
than every naive integer~$1,2,3,\ldots$\ and in this sense can be said
to model an `infinity of coin tosses'.}
and assigning probability
\begin{equation}
\label{e51}
\frac1{|F|}>0
\end{equation}
to each toss, ensuring regularity over standard reals.

The theory BSPT$'$ is conservative over ZF, as shown by Hrbacek and
Katz;%
\footnote{Hrbacek and Katz \cite[p.\,10]{21e}.}
see also \cite{23d}, \cite{23e}.  As is well known, the theory ZF does
not prove Banach--Tarski theorem (paradox).  Meanwhile, Pruss claims
that
\begin{quote}
  ``Regular probability comparisons imply the Banach–Tarski Paradox''
\end{quote}  
(the title of his article!).%
\footnote{Pruss \cite[p.\;3525]{Pr14}.}
Seeing that BSPT$'$ enables regularity, Pruss's claim must depend on
unstated additional hypotheses.

Since in axiomatic approaches to NSA one is working in the field~$\R$,
the remarks above may constitute a rebuttal of item \ref{i2} as well,
provided one specifies what Parker might mean by his criticism
concerning `haecceitistic features'.  The traditional meaning of
haecceitism in philosophy is of little help here.  One of the
definitions in the Standford Encyclopedia of Philosophy, in an article
authored by Sam Cowling,%
\footnote{See \url{https://plato.stanford.edu/entries/haecceitism}}
runs as follows:
\begin{quote}
Modalist Anti-Haecceitism: Necessarily, the world could not be
non-qualitatively different without being qualitatively different.
\end{quote}
Since such a definition does not advance us much in understanding
Parker's objection, we will adopt the following working definition
that seems to match Parker's intention:
\begin{enumerate}\item[]
haecceitistic properties are those that depend on the particular
elements of a set rather than their qualitative properties
(analogously, a haecceitistic property would describe a situation
where, say, the mass of a collection of electrons would depend not
only on how many electrons are present or their momenta, but on which
specific electrons they happen to be).
\end{enumerate}
Relative to such an interpretation of Parker's remarks, one can point
out that, as far as standard real numbers are concerned, they are all
assigned the same probability~\eqref{e51} in the approach outlined
above, and therefore the probability of a collection will be
independent of which specific standard reals happen to be present.  As
far as nonstandard numbers are concerned, they are viewed as ideal
elements in the model, with no claimed physical counterpart.%
\footnote{The existence of such ideal entities in virtually any
mathematical theory of interest was already pointed out by David
Hilbert over a century ago.}

A persistent theme in Parker is the identification of `haecceitistic'
features in mathematical models.  Parker contrasts them with
`\emph{intrinsic}, \emph{qualitative}' ones.%
\footnote{Parker \cite[p.\;3]{Pa21}; emphasis in the original.}
He further argues that
\begin{enumerate}\item[]
[A] system of chances or credences that depend on extrinsic or
haecceitistic factors would be difficult to apply because the
discernible, local properties of a system would not be sufficient to
determine them.%
\footnote{Parker \cite[pp.\;7--8]{Pa21}.}
\end{enumerate}
Among such haecceitistic features, Parker mentions set inclusion,
`parthood' and timing between events in gedankenexperiments, in
addition to the critique of hyperreal probabilities mentioned in
Section~\ref{sec conclusions atrack}.
Parker goes on to attribute the distinction between intrinsic and
extrinsic features to Williamson:
\begin{enumerate}\item[]
On a plausible view, which William\-son seems to take for granted, the
chances and credences of events should be determined by their
\emph{intrinsic}, \emph{qualitative} properties and circumstances, not
extrinsic or haecceitistic features.  The differences Weintraub finds
in William\-son's events appear to be extrinsic and haecceitistic, and
hence to have no force against Williamson, given that view.%
\footnote{Parker \cite[p.\;3]{Pa21}; emphasis in the original.}
\end{enumerate}
However, neither Williamson \cite{Wi07} nor Weintraub
\cite{weintraub} are concerned with identifying extrinsic or
haecceitistic features in the mathematical models of the phenomenon
they discuss (namely, the gedankenexperiment of an infinite sequence
of fair coin tosses, discussed also in
\cite{barrett10,bbd2,benci2018,21b,Pr21a}).
Among the alleged extrinsic features (or `theoretical vices'),%
\footnote{Parker \cite[p.\;3]{Pa21}.}
Parker identifies models that are not `perfectly symmetric' under
isometries based on the real numbers.  The assumption here is that
isometries based on the real numbers (and not based e.g., on the
rational numbers or on hyperreals) necessarily constitute intrinsic
features of some physical phenomena.  Such an assumption is not based
on convincing evidence.  For a related discussion, see \cite{19c}.

As we saw in Section \ref{sec conclusions atrack}, the intuitive idea
that physical principles including symmetry under isometries of the
real line are sufficient to uniquely determine a model of chance
%
%
is refuted by a careful analysis of the Archimedean models.
We welcome Parker's realisation that there is a problem with a blithe
assumption of a time-ordered sequence of trials.  Such an assumption
is a thinly veiled attempt to portray as `physically motivated' the
endorsement of the set~$\N$ as the obligatory formalisation of the
intuitive vast plurality of trials, entailing also an endorsement of a
\emph{cardinality} rather than an \emph{unlimited number} as the index
set.  Indeed, Parker rejects such a \emph{time-ordered} idea.
%
%
Meanwhile, a natural alternative to the \emph{time-ordered events}
model is the \emph{simultaneous nonstandard number of events} model.
Here removing a single trial changes the probability calculated over
the remaining trials, resolving the claimed paradoxes.
The term \emph{haecceitism} from metaphysics, as used by Parker, is
designed to include a criticism of both the time-ordered idea and the
`subset' idea.  Since the technical concept of \emph{subset} is
strictly speaking not needed, and it suffices to work with a more
generic idea of parthood, Parker includes `parthood', as well.%
\footnote{In a more recent paper, Parker rejects this position and
  advocates \emph{in favor of} parthood \cite[Section\;5]{No22}.  More
  specifically, he advocates the use of chances that satisfy the
  Euclidean maxim `the whole is greater than the part'.}
Thus Parker makes short shrift of the original metaphysical meaning of
haecceitism, and proceeds to assign to it a series of additional
meanings that happen to be tailor-made for a critique of regular
models.  Such an ad hoc use of the metaphysical term involves an
equivocation on its meaning, rendering Parker's arguments
inconclusive.

\subsection{Literature and corrections}
\label{s63}

We now turn to Parker's final item \ref{i4} involving the citation of
four articles in the literature.  It is not entirely clear what
Parker's intention was here.  Since he probably did not mean that his
arguments in \cite{Pa21} had already been presented in those articles,
we will assume that those articles argue complementary points that
complete an alleged rebuttal, presented in~\cite{Pa21}, of
non-Archimedean probabilities.  In such case we shall point out that
these earlier articles already received a rebuttal in four recent
articles.%
\footnote{See (\cite{benci2018} 2018), (\cite{19c} 2019), (\cite{21b}
2021), (\cite{21c} 2021).}

The publication of Parker's text in a philosophy journal fits into a
recent tendency for papers critical of non-Archimedean probabilities
both to contain increasingly technical mathematical content and to
appear in philosophy journals whose refereeing corps is of
questionable competence to evaluate such content.  Such an endemic
problem results in serious mathematical errors being published.  

Thus, in \emph{European Journal for Philosophy of Science}, Norton's
text \cite{No18a} had to be followed by a correction in \cite{No18b}.

In \emph{Synthese}, Pruss's text \cite{Pr21b} was followed by a
correction in \cite{Pr22}.  

In a separate article in the same journal and year, Pruss claims that
\begin{enumerate}\item[]
[B]ecause any regular probability measure that has infinitesimal
values can be replaced by one that has all the same intuitive features
but other infinitesimal values, the heart of the arbitrariness
objection remains.%
\footnote{Pruss \cite[Abstract]{Pr21a}.}
\end{enumerate}
However, the other measures, obtained by rescaling the infinitesimal
part of the measure, are not internal and therefore irrelevant to
hyperreal modeling.%
\footnote{For further details, see Bottazzi and Katz \cite{21b}.}

An additional dubious claim by Pruss in \emph{Synthese} \cite{Pr14}
appeared in 2014, as already discussed in Section \ref{f87}.

In \emph{Studies in History and Philosophy of Science}, Parker defines
the concept of \emph{Label independence} as follows: ``All true
statements pertinent to the chances of different outcomes remain true
when the labels are arbitrarily permuted,'' and \emph{Containment}:
``If~$A$ is a proper subset of~$B$, then the chance of~$A$ is strictly
less than that of~$B$, i.e.,~$A\subset B\implies \text{Ch}(A) <
\text{Ch}(B)$,'' and goes on to claim the following:
\begin{enumerate}\item[]
[A]las, a chance
function that satisfies label independence over an infinite sample
space cannot satisfy containment.%
\footnote{Parker \cite[p. 28]{Pa20}.}
\end{enumerate}
However, Parker's claim depends on his assumption that infinity is
modeled by~$\N$, i.e., a countable cardinality; namely a specific
choice of a \emph{basic mathematical model} (see Section~\ref{cart}).
Meanwhile, an alternative is to model infinity by a nonstandard
integer together with a choice of a finite set%
\footnote{Here we work in an axiomatic approach to NSA; see
Section~\ref{f87}.}
containing all standard integers as above, both \emph{label
independence} and \emph{containment} hold, contrary to Parker's
claim.%
\footnote{Additionally, Parker wrote the following in his abstract for
Yaroslav Sergeyev's conference: ``Recently, theories of infinite
quantity alternative to Cantor's have emerged.  We will consider the
theories of numerosity and grossone''~\cite{Pa19b}.  While the theory
of numerosity by Benci, Di Nasso, and others is a mathematical theory
(see e.g., \cite{Be03}, \cite{Be19}), the situation with Sergeyev's
gross one is summarized at
\url{https://u.math.biu.ac.il/~katzmik/pirahodimetz.html}).}

A methodological bias in favor of purely Archimedean frameworks is not
limited to philosophical analyses of vast pluralities of
trials/tickets.  Thus, the applicability of modern infinitesimal
frameworks to the interpretation of the procedures of the Leibnizian
calculus is an area of lively debate.  Most recently, Arthur and
Rabouin (\cite{Ar24} 2024) sought to challenge the notion that
Leibnizian infinitesimals are non-contradictory, and claim that in
Leibniz, the term \emph{infinitesimal} is non-referring.  In their
latest book \cite{Ar25}, they double down on their interpretation
following Ishiguro \cite[ch.\;5]{Is90}, and claim that Leibnizian
infinitesimals were not \emph{quantities}.  A rebuttal appears in Katz
and Kuhlemann (\cite{23f} 2025).%
\footnote{For further analysis see
\url{https://u.math.biu.ac.il/~katzmik/depictions.html}.}

\section
{Robinson's framework for infinitesimal analysis}
\label{s6}

We recall the basic properties of Robinson's framework of analysis
with infinitesimals. We follow the discussion in \cite[Section 3.1
  and~3.2]{21b}.

\subsection{Constructing hyperreal fields}
\label{f3}

Fields~$\astr$ of hyperreal numbers can be obtained by the so-called
\emph{ultrapower construction}. In this approach, one sets~$\astr =
\R^\N\!/\mathcal{U}$, where~$\mathcal{U}$ is a nonprincipal
ultrafilter over~$\N$. The operations and relations on~$\astr$ are
defined from the quotient structure.  For instance, given~$x = [x_n]$
and~$y=[y_n]$, we set~$x+y=[x_n+y_n]$ and~$x \cdot y=[x_n\cdot y_n]$.
We have~$x < y$ if and only if~$\{n\in\N\colon x_n<y_n\}\in
\mathcal{U}$.  A real number~$r$ is identified with the equivalence
class of the constant sequence~$\langle r \rangle$.

Elements of~$\astr$ can be classified according to their size relative
to elements of~$\R$.  A number~$x \in \astr$ is called
\begin{itemize}
	\item limited  iff there exists~$r \in \R$ such that~$|x|<r$;
	\item unlimited iff it is not limited;
	\item infinitesimal iff for every positive~$r \in \R$,
         ~$|x|<r$.
\end{itemize}

Each limited~$x\in \astr$ is infinitely close to an element of~$\R$,
called the \emph{standard part} of~$x$ and denoted~$\st\, x$.
Similarly, if a subset~$X \subseteq \astr$ contains only limited
elements, one sets~$\st\, X=\{ \st\, x : x \in X \}$.

Let~$\mathcal P=\mathcal P(\R)$ be the power set of~$\R$.  Then the
star transform (associating to every standard entity its nonstandard
analog) produces the entity~${}^{\ast}\mathcal P$.  An \emph{internal}
subset~$A\subseteq\astr$ of~$\astr$ is by definition a member
of\,~${}^{\ast}\mathcal P$.
%
%
A subset of~$\astr$ which is not internal is called \emph{external}.

More generally, in the context of the star transform from the
superstructure over~$\R$ to the superstructure over~$\astr$, a set~$A$
of the latter is internal if and only if it is a member of
${}^\ast\hskip-2pt Z$ for some~$Z$ in the superstructure over~$\R$.  A
\emph{hyperfinite set} is an internal set admitting an internal 1-1
correspondence with a member of~$\astn$.
\footnote{For further properties of the ultrapower construction of
hyperreal numbers and the superstructures, see e.g., Fletcher et
al.\ \cite{17f} and Goldblatt \cite{goldblatt}.}
\emph{Hyperfinite additivity} of a measure refers to the analog of
finite additivity for an internal hyperfinite family of sets.

Certain applications, such as those of Section~\ref{sec B-track},
require stronger saturation properties than the construction of
hyperreals outlined above; see further in \cite[Theorem 2.2]{bbd}.

\subsection{Axiomatic/syntactic frameworks of Hrbacek and Nelson}
\label{appendix nelson}

The ultrapower approach is not the only way of obtaining fields with
infinitesimals that are powerful enough to develop models of regular
chances; see Section \ref{f3b}.%
\footnote{See also the discussion in \cite[Section 3.4]{21b} and
\cite[Section~4]{21c}.}

In Nelson's Internal Set Theory \cite{nelson}, one works within the
field~$\R$ and finds numbers that behave like infinitesimals there.%
\footnote{A colleague expressed the following (commonly voiced)
objection: ``I do not think that this phrasing: `one works within the
ordinary real line and finds numbers that behave like infinitesimals
there' is correct: an extra predicate and three additional axioms do
change the definition, and the ordinary real line becomes the
(external) set of standard numbers in Nelson’s model of the real
line.''  To formulate a response, it is necessary to analyze the term
\emph{ordinary real line} used by the referee.  Textbooks
traditionally define the ordinary real line $\R$ via Dedekind cuts or
equivalence classes of Cauchy sequences, and $\N$ as the least
inductive set.  Being a conservative extension of ZFC, Nelson's IST
defines $\N$ and $\R$ in exactly the same traditional way.  Thus `an
extra predicate and three additional axioms' \emph{do~not} change the
definition, and when the referee denies that IST's $\R$ is the
ordinary real line, he is evidently not referring to one of the
traditional definitions of $\R$.  What he is apparently referring to
is an entity described as a \emph{standard model} a.k.a. an
\emph{intended interpretation} (which is assumed not to contain
certain numbers expressible in IST).  We note that the existence of
such an entity is a realist philosophical assumption rather than a
mathematical fact.  Robinson specifically rejected such an assumption
in \cite[p.\;242]{Ro65}.}
This is possible by enriching the language of set theory through the
introduction of a one-place predicate~$\st$ called `standard',
together with additional axioms governing its interaction with the
axioms of Zermelo–Fraenkel set theory.  To summarize, Nelson's theory
is formulated in the~$\st$–$\in$–language, whereas traditional set
theories such as ZFC are formulated in the
$\in$–language.%
\footnote{Nelson applied these ideas to probability theory in
  \cite{Ne87}.}
At the same time, Karel Hrbacek \cite{hrbacek} developed a different
axiomatic framework, now called HST.%
\footnote{Hrbacek's theory is formulated in the~$\mathfrak
S$–$\mathfrak I$–$\in$–language, where~$\mathfrak S$ is a unary
predicate analogue to~$\st$ and~$\mathfrak I$ is an unary predicate
that express internality.  For further details, see Fletcher et
al.~\cite{17f}.}

In Nelson's and Hrbacek's frameworks, unlimited and infinitesimal
numbers are found in the field~$\R$ itself rather than in its proper
extension.  For instance, an infinitesimal~$\varepsilon$ is a real
number satisfying~$|\varepsilon| < r$ for every standard
positive~$r\in\R$.  The possibility of viewing infinitesimals as being
found within~$\R$ itself undermines philosophical criticisms anchored
in the extension view, such as Parker's claim that infinitesimal
probabilities depend on `misleading excess structure'.%
\footnote{Parker \cite[p.\;9]{Pa21}.  See further in
Section~\ref{s55}.  Similar objections voiced by Easwaran and Towsner
have been addressed in \cite[Section~2.8]{19c}.}
Axiomatic frameworks have sometimes been criticized on the grounds
that they are allegedly unable to handle certain hyperreal
constructions, such as those of nonstandard hulls and Loeb measures.
Hrbacek and Katz \cite{23c} show that axiomatic frameworks can indeed
handle such constructions.

Axiomatic frameworks SPOT and SCOT for NSA that are conservative
respectively over ZF and ZF+ADC (here ADC is the axiom of dependent
choice) were developed in \cite{21e}.

\subsection{The transfer principle of Robinson's framework}
\label{appendix a2}

The transfer principle asserts that the {internal objects} of
Robinson's framework or all objects in Nelson's framework 
satisfy all the first-order properties of the corresponding classical
objects.

Kanovei et al.\ describe the transfer principle as
\begin{enumerate}\item[]
	a type of theorem that, depending on the context, asserts that
	rules, laws or procedures valid for a certain number system,
	still apply (i.e., are ``transferred") to an extended number
	system.%
        \footnote{Kanovei et al.~\cite[p.\;113]{18i}.}
\end{enumerate}
For the role of the transfer principle in a mathematical theory with
infinitesimals, we refer to \cite[Sections 3.3 and 3.4]{21b}.

\section{Conclusion}
\label{s7}

We have analyzed the extent to which Archimedean and non-Archi\-medean
models of a fair spinner satisfy the seemingly natural and exhaustive
hypotheses, namely: symmetry under real rotations {\sy}$_{\R}$,
uniformity~{\Un}, regularity \textbf{Reg}, and {\Po} (each point
outcome is possible).  Currently, no known model is able to satisfy
all these hypotheses simultaneously.  Thus Archimedean models reject
regularity, while non-Archi\-medean models weaken {\sy}$_{\R}$.

Regular models of the fair spinner have been criticized on the grounds
that they allegedly do not satisfy physical intuitions and introduce
undesirable extrinsic features, being non-unique.  An examination of
the existing models of the spinner indicates that such a critique
applies to Archimedean models, as well.  Indeed, neither the
hypotheses \mbox{${\sy}_{\R}+{\Un}+{\Po}$} (or even the stronger
${\sy}_{\R}+{\Un}+\textbf{Tot}$, where \textbf{Tot} denotes the
hypothesis that every outcome is possible) nor the
hypotheses~\mbox{${\Un}+\textbf{Tot} + \textbf{Reg}$} are sufficient
to uniquely determine a model, unless one renounces strong choice
principles that are necessary for proving fundamental theorems of
set-theoretic mathematics.  Since there are multiple Archimedean
models of the fair spinner, non-Archimedean models cannot be ruled out
simply on the grounds that they are non-unique, as Archimedean models
are similarly non-unique.

It is possible to recover uniqueness by focusing on a minimal model.
In such case, we saw that the minimal Archimedean model
can be regularized to a non-Archimedean model with the additional
property \textbf{Reg} as well as the restriction of the property
{\sy}$_{\R}$ to the algebra generated by half-open intervals.

The non-Archimedean models can, moreover, be further sharpened with
additional hypotheses, such as {\Un}$_{\mathfrak B}$, namely
uniformity over an algebra of Lebesgue measurable sets where every
nonempty set has a positive measure, or \textbf{Co}$_{H}$, namely
coherence with the Hausdorff~$t$-measures for~$t\in[0,+\infty)$, that
improve the translation invariance or add coherence with a family of
real-valued measures.  These additional features may be relevant in
various applications, such as the study of Brownian motion or analysis
on fractals \cite{lindstrom, lindstrom 2}.  The most recent work in
probability and statistics using NSA methods includes Duanmu et
al.~(\cite{Du23} 2023), Alam and Sengupta (\cite{Al23} 2023), Anderson
et al.~(\cite{An24}~2024).

\section*{Acknowledgments}

We are grateful to Karel Hrbacek and the anonymous referee for helpful
comments.

\end{document}